\newtheorem*{mainthm}{Main Theorem}
\newtheorem{theorem}{Theorem}[section]
\newtheorem{cor}[theorem]{Corollary}
\newtheorem{lem}[theorem]{Lemma}
\theoremstyle{definition}
\newtheorem{defn}[theorem]{Definition}
\theoremstyle{remark}
\newtheorem{rem}[theorem]{Remark}
\newtheorem{example}[theorem]{Example}
\numberwithin{equation}{section}
\begin{document}

\title[Time-preserving Structural Stability of Differential Systems]
{Time-preserving Structural Stability of Hyperbolic Differential Dynamics with Noncompact Phase Spaces}%
\author[X.~Dai]{Xiongping Dai}%
\address{Department of Mathematics\\ Nanjing University\\
Nanjing, 210093, P. R. CHINA}%
\email{xpdai@nju.edu.cn}%

\thanks{This project was supported by
NSFC (No.~10671088) and 973 project (No.~2006CB805903)}%
\subjclass[2000]{Primary~37C10, 34D30; Secondary~37C20, 37D20}%
\keywords{Structural stability, hyperbolic differential system, Liao standard system}%

\date{May 17, 2008}
\begin{abstract}
Let $S\colon\mathbb{E}\rightarrow\mathbb{R}^n$ where
$T_w\mathbb{E}=\mathbb{R}^n$ for all $w\in\mathbb{E}$, be a
$C^1$-differential system on an $n$-dimensional Euclidean $w$-space
$\mathbb{E}$, which naturally gives rise to a flow
$\phi\colon(t,w)\mapsto t_\cdot w$ on $\mathbb{E}$, and let
$\Lambda$ be a $\phi$-invariant closed subset containing no any
singularities of $S$. If $\Lambda$ is compact and hyperbolic, then
Anosov's theorem asserts that $S$ is structurally stable on
$\Lambda$ in the sense of topological equivalence; that is, for any
$C^1$-perturbation $V$ close to $S$, there is an
$\varepsilon$-homeomorphism $H\colon \Lambda\rightarrow \Lambda_V$
sending orbits $\phi(\mathbb{R}, w)$ of $S$ into orbits
$\phi_V(\mathbb{R},H(w))$ of $V$ for all $w$ in $\Lambda$. In this
paper, using Liao theory Anosov's result is generalized as follows:
Let $\psi_V\colon\mathbb{R}\times\Sigma\rightarrow\Sigma$ be the
cross-section flow of $V$ relative to $S$ locally defined on the
Poincar\'{e} cross-section bundle
$\Sigma=\bigcup_{w\in\Lambda}\Sigma_w$ of $S$, where
$\Sigma_w=\left\{w^\prime\in\mathbb{E}\,|\,\langle
S(w),w^\prime-w\rangle=0\right\}$. If $S$ is hyperbolic on $\Lambda$
and $V$ is $C^1$-close to $S$, then there is an
$\varepsilon$-homeomorphism $w\mapsto H(w)\in\Sigma_w$ from
$\Lambda$ onto a closed set $\Lambda_V$ such that
$\psi_V(t,H(w))=H(t_\cdot w)$ for all $w\in\Lambda$, where $\Lambda$
need not be compact. Finally, an example is provided to illustrate
our theoretical outcome.
\end{abstract}

\maketitle
\section{Introduction}\label{sec1}
In~\cite{L66,L74}, professor S.-T.~Liao established the theory of
standard systems of differential equations for $C^1$-differential
dynamical systems on compact Riemannian manifolds. Then he
systematically applied methods in the qualitative theory of ODE to
study stability problems of differentiable dynamical systems via his
theory~\cite{L96}. We in~\cite{Dai,Dai07} generalized in part Liao's
theory to differential systems on Euclidean spaces. Via the
generalized, in turn we can apply the approaches of ergodic theory
and differentiable dynamical systems to the study of the qualitative
theory of ODE~\cite{Dai07,Dai08}. In the present paper, we continue
to perfect Liao theory and give a further application.

Assume, throughout this paper, that
$S\colon\mathbb{E}\rightarrow\mathbb{R}^n$ is a $C^1$-vector field
on an $n$-dimensional Euclidean $w$-space $\mathbb{E}$, where $n\ge
2$ and $T_w\mathbb{E}=\mathbb{R}^n$ for all $w$, and the equation
$\dot{w}=S(w)$ naturally induces a continuous-time dynamical system
$\phi\colon\mathbb{R}\times\mathbb{E}\rightarrow\mathbb{E};\
(t,w)\mapsto t_\cdot w$ on the phase-space $\mathbb{E}$. Let
$$
\Sigma={\bigcup}_{w\in\mathbb{E}}\Sigma_w\quad\textrm{ where }
\Sigma_w=\left\{w^\prime\in\mathbb{E}\,|\,\langle
S(w),w^\prime-w\rangle=0\right\},
$$
be the cross-section bundle of $S$. Then, $S$ gives naturally rise
to a formal (local) Poincar\'{e} cross-section flow
$$
\psi\colon\mathbb{R}\times\Sigma\rightarrow\Sigma;\ (t,w+x)\mapsto
t_\cdot w +\psi_{t,w}x,
$$
where $w^\prime=w+x$ means $w^\prime\in\Sigma_w$ and where
$\psi_{t,w}\colon\Sigma_{w}-w\rightarrow\Sigma_{t_\cdot w}-t_\cdot
w$ is locally well defined for any
$(t,w)\in\mathbb{R}\times\mathbb{E}$ by
$\psi_{t,w}x=\phi(t_0,w+x)-t_\cdot w$, where $t_0$ is the first
$t^\prime>0$ when $t>0$ or the first $t^\prime<0$ when $t<0$ with
$\phi(t^\prime,w+x)\in\Sigma_{t_\cdot w}$. Clearly, $\psi$ is a
local skew-product flow based on $\phi$ satisfying
$\psi_{t,w}\textbf{0}=\textbf{0}$.

Let $\mathfrak{X}^1(\mathbb{E})$ be the space of all $C^1$-vector
fields on $\mathbb{E}$ endowed with the $C^1$-topology induced by
the usual $C^1$-norm $\|\cdot\|_1$. Then, for any
$V\in\mathfrak{X}^1(\mathbb{E})$, on $\Sigma$ we may also naturally
define a formal local skew-product flow
$$
\psi_V\colon\mathbb{R}\times\Sigma\rightarrow\Sigma;\ (t,w+x)\mapsto
t_\cdot w+\psi_{V;t,w}x.
$$
Note here that $\psi_{V;t,w}\textbf{0}$ need not equal $\textbf{0}$
when $V\not=S$.

Let $\mathbb{T}_w=T_\textbf{0}\Sigma_w$ be the $(n-1)$-dimensional
tangent space to the hyperplane $\Sigma_w$ at $w+\textbf{0}$ for all
$w\in\mathbb{E}$ and
$\mathbb{T}=\bigcup_{w\in\mathbb{E}}\mathbb{T}_w$ called the
\emph{transversal tangent bundle to $S$} over $\mathbb{E}$. Clearly,
$\mathbb{T}_w=\Sigma_w-w=\{x\in\mathbb{R}^n\,|\,\langle
S(w),x\rangle=0\}$. Then, we can define naturally the linear
skew-product flow transversal to $S$
$$
\Psi\colon\mathbb{R}\times\mathbb{T}\rightarrow\mathbb{T};\
(t,(w,x))\mapsto(t_\cdot w,\Psi_{t,w}x),
$$
where $\Psi_{t,w}\colon\mathbb{T}_{w}\rightarrow\mathbb{T}_{t_\cdot
w}$ is defined as $\Psi_{t,w}=D_\textbf{0}\psi_{t,w}$ for any
$(t,w)\in\mathbb{R}\times\mathbb{E}$, associated with $S$.

Recall that a $\phi$-invariant closed subset $\Lambda$ is said to be
\emph{hyperbolic}, provided that there exist constants $C\ge 1,
\lambda<0$ and a continuous $\Psi$-invariant splitting
\begin{equation*}
\mathbb{T}_{w}=T_{w}^s\oplus T_{w}^u\quad w\in \Lambda
\end{equation*}
such that
\begin{align*}
\|\Psi_{t_0+t,w}x\|&\le C^{-1}\exp(\lambda
t)\|\Psi_{t_0,w}x\|\quad\forall\,x\in
T_{w}^s\\
\intertext{and} \|\Psi_{t_0+t,w}x\|&\ge C\exp(-\lambda
t)\|\Psi_{t_0,w}x\|\quad\forall\,x\in T_{w}^u
\end{align*}
for any $t_0\in\mathbb{R}$ and for all $t>0$.

Then, Anosov's structural stability theorem~\cite{Ano,Mos} asserts
that: \textit{If $\Lambda$ is a compact hyperbolic set for $S$, then
for any $\varepsilon>0$ there is a $C^1$-neighborhood $\mathcal {U}$
of $S$ in $\mathfrak{X}^1(\mathbb{E})$ such that, if $V\in\mathcal
{U}$ then there exists a $\varepsilon$-topological mapping $h$ from
$\Lambda$ onto some subset $\Lambda_V$ of $\mathbb{E}$ which sends
orbits of $S$ in $\Lambda$ into orbits of $V$ in $\Lambda_V$.}

This important theorem was extended to axiom A differential systems
\cite{PS,Ro}, and to $C^0$-perturbations by considering the
so-called semi-structural stability independently by~\cite{KM,L74};
for discrete versions, see~\cite{Rob,Robs,Mos,Wal, Nit}. On another
direction, in this paper, we study the structural stability of
\emph{noncompact} hyperbolic set under \emph{time-preserving}
conjugacy between the induced cross-section flows. More precisely,
using Liao theory we prove the following.

\begin{mainthm}
Let $\Lambda$ be a hyperbolic set for $S$, not necessarily compact,
satisfying the following conditions:
\begin{enumerate}
\item[(U1)] The first derivative $S^\prime(w)$ is uniformly bounded on $\Lambda$;

\item[(U2)] $0<\inf_{w\in\Lambda} \|S(w)\|\le\sup_{w\in\Lambda}
\|S(w)\|<\infty$;

\item[(U3)] $S^\prime(w)$ is uniformly continuous at $\Lambda$; that is, to any
$\epsilon>0$ there is some $\delta>0$ so that for any $\mathfrak
w\in\Lambda$, $\|S^\prime(w)-S^\prime(\mathfrak w)\|<\epsilon$
whenever $\|w-\mathfrak w\|<\delta$.
\end{enumerate}
Then, for any $\varepsilon>0$ there is a $C^1$-neighborhood
$\mathcal {U}$ of $S$ in $\mathfrak{X}^1(\mathbb{E})$ such that for
any $V\in\mathcal {U}$ there exists a $\varepsilon$-topological
mapping $H$ from $\Lambda$ onto some closed subset $\Lambda_V$ which
sends orbits of $S$ in $\Lambda$ into orbits of $V$ in $\Lambda_V$,
such that $H(w)\in \Sigma_w$ and $\psi_V(t,H(w))=H(t_\cdot w)$ for
all $w\in\Lambda$ and for any $t\in\mathbb{R}$.
\end{mainthm}

Notice here that if $\Lambda$ is compact, then conditions (U1), (U2)
and (U3) hold automatically. So our result is an extension of the
classical one. Even for the compact case, the time-preserving
property is still a new ingredient in our main theorem.

To prove this result, we will introduce the reduced standard systems
of differential equations for perturbations of $S$ in $\S\ref{sec2}$
and recall Liao's exponential dichotomy in $\S\ref{sec3}$. Finally,
we prove the Main Theorem in $\S\ref{sec4}$, using simplified and
extended Liao approach that is completely different from Anosov's
geometrical approach~\cite{Ano} and Moser's functional
approach~\cite{Mos} to differentiable dynamical systems on compact
Riemannian manifolds. And in $\S\ref{sec4}$, we will construct a
differential system which has a noncompact, structurally stable,
hyperbolic subset.

\section{Liao standard system of differential equations}\label{sec2}

Let $S$ be any given $C^1$-differential system on $\mathbb{E}$ and
$\Lambda$ a $\phi$-invariant closed subset in $\mathbb{E}$
satisfying conditions (U1), (U2) and (U3) as in the Main Theorem
stated in $\S\ref{sec1}$. Around a regular orbit
$\phi(\mathbb{R},w)$ we defined in \cite{Dai07} the reduced standard
systems for $S$ itself. However, we will introduce below the
standard systems for perturbations $V$ of $S$.

\subsection{}

As usual in Liao theory~\cite{Dai07,Dai08}, let $\mathscr
F_{n-1}^{*\sharp}(\Lambda)={\bigsqcup}_{w\in\Lambda}\mathscr
F_{n-1,w}^{*\sharp}$ be the bundle of transversal orthonormal
$(n-1)$-frames, where the fiber over $w$ is defined as
$$
\mathscr
F_{n-1,w}^{*\sharp}=\left\{{\gamma}=(\vec{u}_1,\ldots,\vec{u}_{n-1})\in\mathbb{T}_w\times\cdots\times\mathbb{T}_w\,|\langle\vec{u}_i,\vec{u}_j\rangle=\delta^{ij}\textrm{
for }1\le i,j\le n-1 \right\},
$$
endowed with the naturally induced topology. Then, $S$ naturally
generates a skew-product flow over $\phi$
\begin{equation}\label{eq2.1}
\chi^{*\sharp}\colon\mathbb{R}\times\mathscr
F_{n-1}^{*\sharp}(\Lambda)\rightarrow\mathscr
F_{n-1}^{*\sharp}(\Lambda);\ (t,(w,{\gamma}))\mapsto(t_\cdot
w,\chi_{t,w}^{*\sharp}{\gamma}),
\end{equation}
where $\chi_{t,w}^{*\sharp}\colon\mathscr
F_{n-1,w}^{*\sharp}\rightarrow\mathscr F_{n-1,t_\cdot w}^{*\sharp}$
is defined by the standard Gram-Schmidt orthonormalization process;
cf.~\cite{Dai, Dai07} for the details.

Let $\textbf{e}=\{\vec{e}_1,\ldots,\vec{e}_{n-1}\}$ where
$\vec{e}_j=(\stackrel{j^{\textrm{th}}}{0,\ldots,0,1,0,\ldots,0})^\textrm{T}\in\mathbb
R^{n-1}$, be the standard basis of $\mathbb R^{n-1}$ and we view
$y\in\mathbb R^{n-1}$ with components $y^1,\ldots,y^{n-1}$ as a
column vector $(y^1,\ldots,y^{n-1})^\textrm{T}$ and
$\gamma\in\mathscr F_{n-1,w}^{*\sharp}$ as an $n$-by-$(n-1)$ matrix
with columns
$\textrm{col}_1{\gamma},\ldots,\textrm{col}_{n-1}{\gamma}$
successively.

Given any orthonormal $(n-1)$-frame $(w,{\gamma})\in \mathscr
F_{n-1}^{*\sharp}(\Lambda)$, sometimes written simply as $\gamma_w$,
we define by linear extension the linear transformation
\begin{equation}\label{eq2.2}
\mathcal T_{{\gamma}_w}^*\colon\mathbb
R^{n-1}\rightarrow\mathbb{T}_w
\end{equation}
in the way
\begin{equation*}
\vec{e}_j\mapsto {\mathrm{col}}_j{\gamma}\quad(1\le j\le n-1).
\end{equation*}
Since ${\gamma}$ is an orthonormal basis of $\mathbb T_w$, $\mathcal
T_{\gamma_w}^*$ is an isomorphism such that
\begin{equation*}
\mathcal T_{{\gamma}_w}^*(y)={\gamma}
y=\sum_{j=1}^{n-1}y^j\textrm{col}_j\gamma\textrm{ and }
\|y\|_{\mathbb R^{n-1}}= \|{\gamma} y\|_{\mathbb{T}_w}\quad\forall\,
y\in\mathbb R^{n-1}.
\end{equation*}
Moreover, we now define
\begin{equation}\label{eq2.3}
C_{{\gamma}_w}^*(t)=\mathcal
T_{\chi^{*\sharp}(t,{\gamma}_w)}^{*-1}\circ \Psi_{t,w}\circ \mathcal
T_{\gamma_w}^*\quad\forall\, t\in\mathbb R,
\end{equation}
where $\chi^{*\sharp}\colon\mathbb{R}\times\mathscr
F_{n-1}^{*\sharp}(\Lambda)\rightarrow\mathscr
F_{n-1}^{*\sharp}(\Lambda)$ as in (\ref{eq2.1}). Then the
commutativity holds:
\begin{equation}\label{eq2.4}
\begin{CD}
\mathbb R^{n-1}@>{C_{\gamma_w}^*(t)}>>\mathbb R^{n-1}\\
@V{\mathcal T_{{\gamma}_w}^*}VV @VV{\mathcal
T_{\chi^{*\sharp}(t,{\gamma}_w)}^*}V\\
\mathbb T_w@>{\Psi_{t,w}}>>\mathbb T_{t_\cdot w}.
\end{CD}
\end{equation}
We now think of $C_{{\gamma}_w}^*(t)$ as an $(n-1)\times
(n-1)$-matrix under the base $\textbf{e}$ of $\mathbb{R}^{n-1}$.
Clearly, $t\mapsto\frac{d}{dt}C_{{\gamma}_w}^*(t)$ makes sense since
${S}$ is of class $C^1$ and by (\ref{eq2.4}) we have
\begin{equation}\label{eq2.5}
C_{{\gamma}_w}^*(t_1+t_2)=
C_{\chi^{*\sharp}(t_1,{\gamma}_w)}^*(t_2)\circ
C_{{\gamma}_w}^*(t_1)\quad\forall\,t_1,t_2\in\mathbb R.
\end{equation}
Put
\begin{equation}\label{eq2.6}
R_{{\gamma}_w}^*(t)=\left\{\frac{d}{dt}C_{{\gamma}_w}^*(t)\right\}{C_{{\gamma}_w}^*(t)}^{-1}\quad\forall\,(w,{\gamma})\in\mathscr
F_{n-1}^{*\sharp}(\Lambda).
\end{equation}

\begin{defn}\label{def2.1}
The linear differential equation
\begin{equation*}
\dot{y}=R_{{\gamma}_w}^*(t)y\quad(t,y)\in\mathbb R\times\mathbb
R^{n-1} \leqno{(R_{{\gamma}_w}^*)}
\end{equation*}
for any $(w,{\gamma})\in\mathscr F_{n-1}^{*\sharp}(\Lambda)$, is
called the \emph{reduced linearized system} of ${S}$ under the
moving frame $\chi^{*\sharp}(t,{\gamma}_w)$. See \cite{Dai,Dai07}.
\end{defn}

These reduced linearized systems of ${S}$ possess the following
properties.

\begin{lem}[{\cite{Dai,Dai07}}]\label{lem2.2}
The following statements hold:
\begin{enumerate}
\item Uniform boundedness: $R_{{\gamma}_w}^{*}(t)$  is
continuous in $(t,(w,\gamma))$ in
$\mathbb{R}\times\mathscr{F}_{n-1}^{*\sharp}(\Lambda)$ with
$$\eta_\Lambda:=\sup\left\{\sum_{i,j}|R_{{\gamma}_w}^{*ij}(t)|;\, t\in\mathbb{R}, (w,\gamma)\in\mathscr F_{n-1}^{*\sharp}(\Lambda)\right\}<\infty.$$

\item Upper triangularity: ${R}_{\gamma_w}^*(t)$ is upper-triangular with
$$
{R}_{\gamma_w}^*(t)=\left[\begin{matrix}
{\omega}_1^*(\chi^{*\sharp}(t,\gamma_w))&\cdots&*\\
\vdots&\ddots&\vdots\\
0&\cdots&{\omega}_{n-1}^*(\chi^{*\sharp}(t,\gamma_w))
\end{matrix}\right]\quad\forall\,t\in\mathbb R
$$
where ${\omega}_k^*(w,\gamma), 1\le k\le n-1$, called the ``Liao
qualitative functions" of $S$, are uniformly continuous in
$(w,\gamma)\in\mathscr{F}_{n-1}^{*\sharp}(\Lambda)$.

\item Geometrical interpretation: Let $\vec{v}={\gamma}{y}\in
\mathbb T_w$ for ${y}\in\mathbb R^{n-1}$. If $y(t)=y(t,{y})$ is the
solution of $(R_{\gamma_w}^*)$ with $y(0)={y}$, then
$$
\Psi_{t,w}\vec{v}=\mathcal T_{\chi^{*\sharp}(t,\gamma_w)}^*
y(t)=(\chi_{t,w}^{*\sharp}\gamma) y(t).
$$
Conversely, letting $x(t)=(x^1(t),\ldots,x^{n-1}(t))^{\mathrm
T}\in\mathbb R^{n-1}$ be defined by
$$
x^i(t)=\left\langle
\Psi_{t,w}\vec{v},\mathrm{col}_i{\chi_{t,w}^{*\sharp}}\gamma\right\rangle_{t_\cdot
w}\quad i=1,\ldots,{n-1},
$$
we have $\dot{x}(t)={R}_{\gamma_w}^*(t)x(t)\textrm{ and }x(0)={y}$.
Particularly, $C_{{\gamma}_w}^*(t)$ is the fundamental matrix
solution of $(R_{{\gamma}_w}^*)$.
\end{enumerate}
\end{lem}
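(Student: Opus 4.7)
The plan is to derive all three assertions directly from the explicit formula $C_{\gamma_w}^*(t)=\mathcal T_{\chi^{*\sharp}(t,\gamma_w)}^{*-1}\circ \Psi_{t,w}\circ \mathcal T_{\gamma_w}^*$ together with the Gram--Schmidt construction that defines $\chi^{*\sharp}$. I would treat the three claims in the order (2), (3), (1): the upper-triangular structure is the cleanest algebraic consequence of Gram--Schmidt and fixes the shape of $R_{\gamma_w}^*(t)$; the geometric interpretation then identifies $C_{\gamma_w}^*(t)$ as the fundamental solution; and with that identification in hand the estimates of (1) reduce to uniform control of the entries.

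For (2), the key observation is that $\chi_{t,w}^{*\sharp}\gamma$ is obtained from the pushed-forward frame $(\Psi_{t,w}\vec u_1,\dots,\Psi_{t,w}\vec u_{n-1})$ by column-wise Gram--Schmidt orthonormalization. Hence for every $1\le k\le n-1$ one has the flag identity
\[
\Psi_{t,w}\,\mathrm{span}\{\vec u_1,\dots,\vec u_k\}=\mathrm{span}\bigl\{\mathrm{col}_1\chi_{t,w}^{*\sharp}\gamma,\dots,\mathrm{col}_k\chi_{t,w}^{*\sharp}\gamma\bigr\}.
\]
Expressing this identity coordinate-wise through the isometries $\mathcal T^{*}$ forces the $j$-th column of $C_{\gamma_w}^*(t)$ to vanish below row $j$, so $C_{\gamma_w}^*(t)$ is upper triangular; then $C_{\gamma_w}^*(t)^{-1}$ is upper triangular, and hence so is $R_{\gamma_w}^*(t)=\dot C_{\gamma_w}^*(t)\,C_{\gamma_w}^*(t)^{-1}$ by (\ref{eq2.6}). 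The diagonal entries depend only on the instantaneous frame $\chi^{*\sharp}(t,\gamma_w)$, because the infinitesimal Gram--Schmidt correction affecting the $k$-th direction at time $t$ feels only the $k$-th rate and not the flow parameter explicitly, and these define the Liao qualitative functions $\omega_k^*$. For (3), the commutative diagram (\ref{eq2.4}) together with (\ref{eq2.6}) immediately shows that $t\mapsto C_{\gamma_w}^*(t)y$ solves $\dot y=R_{\gamma_w}^*(t)y$ with initial value $y$; taking $y=\vec e_k$ for each $k$ identifies $C_{\gamma_w}^*(t)$ as the fundamental matrix, and taking inner products against the moving basis produces the converse formula for $x(t)$.

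For (1), continuity of $R_{\gamma_w}^*(t)$ in $(t,(w,\gamma))$ is immediate from the $C^1$-dependence of $\Psi$ on $w$ (since $S\in\mathfrak X^1(\mathbb E)$) and the real-analyticity of Gram--Schmidt on full-rank frames. The uniform bound $\eta_\Lambda<\infty$ is where the hypotheses of the Main Theorem enter. Each entry of $R_{\gamma_w}^*(t)$ admits an explicit expression as an inner product of $S'(w)$-terms against the orthonormal frame vectors, together with a correction term involving $S(w)/\|S(w)\|^2$ arising from differentiating the projection onto the moving cross-section $\Sigma_{t_\cdot w}$. Hypothesis (U1) bounds $S'$ on $\Lambda$, (U2) ensures the correction factor $1/\|S\|^2$ is uniformly bounded, and (U3) promotes pointwise boundedness to the uniform continuity of $\omega_k^*$ on $\mathscr F_{n-1}^{*\sharp}(\Lambda)$.

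The main obstacle I anticipate is organising the explicit formulas for the off-diagonal Gram--Schmidt correction entries so that they can be bounded uniformly. The diagonal rates are controlled by $\|S'\|_\infty$ in a transparent way, but the off-diagonal terms mix the flow direction $S(w)$ with the transversal frame, and it is only the uniform transversality guaranteed by (U2) that prevents these corrections from blowing up: without a uniform positive lower bound on $\|S\|$, the cross-sections $\Sigma_w$ could tilt arbitrarily near the flow direction and the change-of-frame matrices could become ill-conditioned in a non-uniform way, invalidating the uniform bound $\eta_\Lambda<\infty$.
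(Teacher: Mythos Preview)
The paper does not supply a proof of this lemma at all: it is stated as a citation from \cite{Dai,Dai07} and used as a black box. So there is no ``paper's own proof'' to compare against, and your proposal is effectively filling in what those references presumably contain.

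That said, your outline is sound and follows the expected route. The flag-preservation argument for (2) is exactly the mechanism behind upper-triangularity of $C_{\gamma_w}^*(t)$ and hence of $R_{\gamma_w}^*(t)$; the cocycle identity (\ref{eq2.5}) combined with (\ref{eq2.6}) gives $R_{\gamma_w}^*(t)=R_{\chi^{*\sharp}(t,\gamma_w)}^*(0)$, which is the clean way to see that the diagonal entries are functions of the instantaneous frame alone and thereby to define $\omega_k^*$. For (3), differentiating $y(t)=C_{\gamma_w}^*(t)y$ and invoking (\ref{eq2.6}) is the right computation, and the converse direction follows because the $\mathcal T^*$ maps are isometries so the inner-product formula simply reads off the coordinates of $y(t)$ in the moving frame. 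Your diagnosis of where (U1)--(U3) enter in (1) is accurate: the explicit entries of $R_{\gamma_w}^*(0)$ are built from $S'(w)$, the unit vector $S(w)/\|S(w)\|$, and the frame $\gamma$, so (U1) and (U2) give the uniform bound while (U3) gives uniform continuity of the $\omega_k^*$.

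One small point worth tightening: your phrase ``depends only on the instantaneous frame \dots because the infinitesimal Gram--Schmidt correction \dots feels only the $k$-th rate'' is a bit heuristic. The rigorous justification is the cocycle identity $R_{\gamma_w}^*(t)=R_{\chi^{*\sharp}(t,\gamma_w)}^*(0)$, which reduces everything to computing $\left.\tfrac{d}{dt}\right|_{t=0}C_{\gamma_w}^*(t)$ at a fixed frame; this is where the explicit formulas in terms of $S,S'$ and $\gamma$ emerge and where the uniform estimates become transparent.
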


As a consequence of the above lemma, we have

\begin{cor}\label{cor2.3}
Let $\Lambda$ be hyperbolic for $S$ associated to $\Psi$-invariant
splitting $\mathbb{T}_\Lambda=T_\Lambda^s\oplus T_\Lambda^u$. Then,
there are two constants $\pmb{\eta}>0$ and $\textbf{d}>0$ such that:
for any $(w,\gamma)\in\mathscr{F}_{n-1}^{*\sharp}(\Lambda)$, if
$\mathrm{col}_i\gamma\in T_w^s$ for $i=1,\ldots,\dim T_w^s$ then
\begin{align*}
&\int_0^T\omega_k^*(\chi^{*\sharp}(t_0+t,(w,\gamma)))\,dt\le
-\pmb{\eta}T,\quad 1\le k\le \dim T_w^s\\
\intertext{and}&\int_0^T\omega_k^*(\chi^{*\sharp}(t_0+t,(w,\gamma)))\,dt\ge
\pmb{\eta}T,\quad \dim T_w^s+1\le k\le n-1
\end{align*}
for any $t_0\in\mathbb{R}$ and for all $T\ge \textbf{d}$.
\end{cor}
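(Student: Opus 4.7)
My plan is to reduce the statement to the case $t_0 = 0$ via the cocycle identity~(\ref{eq2.5}) and the observation that the hypothesis ``$\mathrm{col}_i\gamma \in T^s_w$ for $1 \le i \le \dim T^s_w$'' is preserved by the moving frame flow $\chi^{*\sharp}$. Indeed, $\Psi_{t,w}$ preserves $T^s$ by invariance of the splitting, and Gram--Schmidt orthonormalization does not change the linear span of the first $\dim T^s_w$ columns, so the first $\dim T^s_w$ columns of $\chi^{*\sharp}_{t,w}\gamma$ still span $T^s_{t\cdot w}$. It therefore suffices to prove the bounds at $t_0 = 0$ for an arbitrary adapted frame, after which the general case follows by replacing $(w,\gamma)$ by $\chi^{*\sharp}(t_0,(w,\gamma))$. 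I will choose $\pmb{\eta} = -\lambda/2$ and $\textbf{d}$ large enough to absorb the additive constants appearing below.

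For the stable range $1 \le k \le \dim T^s_w$, I apply Lemma~\ref{lem2.2}(3) with $\vec{v} = \mathrm{col}_k\gamma \in T^s_w$ and initial data $\vec{y} = \vec{e}_k$: the resulting solution $y(t)$ of $(R^*_{\gamma_w})$ satisfies $\|y(t)\| = \|\Psi_{t,w}\mathrm{col}_k\gamma\|$ by orthonormality of the moving frame. Upper triangularity (Lemma~\ref{lem2.2}(2)) together with $y^j(0) = 0$ for $j > k$ forces $y^j(t) \equiv 0$ for all $j > k$, so the $k$-th coordinate decouples into the scalar equation $\dot{y}^k = \omega_k^*(\chi^{*\sharp}(t,\gamma_w))\,y^k$, giving
\begin{equation*}
y^k(t) = \exp\!\left(\int_0^t \omega_k^*(\chi^{*\sharp}(\tau,\gamma_w))\,d\tau\right).
\end{equation*}
Hyperbolicity then yields $|y^k(t)| \le \|y(t)\| \le C^{-1}e^{\lambda t}$, and taking logarithms produces the desired upper bound $\int_0^T \omega_k^*\,d\tau \le \lambda T - \log C \le -\pmb{\eta}T$ for all $T \ge \textbf{d}$.

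For the unstable range $\dim T^s_w < k \le n-1$, the key idea is a sub-volume identity. By upper triangularity, the upper-left $k\times k$ block of $C^*_{\gamma_w}(t)$ has determinant $\exp\bigl(\int_0^t \sum_{j=1}^k \omega_j^*\,d\tau\bigr)$, and by Lemma~\ref{lem2.2}(3) this equals the Jacobian of $\Psi_{t,w}$ on the subspace $V_k := \mathrm{span}\{\mathrm{col}_1\gamma,\ldots,\mathrm{col}_k\gamma\}$ computed with the induced Euclidean volumes on $\mathbb{T}_w$ and $\mathbb{T}_{t\cdot w}$. Taking the ratio for $k$ and $k-1$, $\exp\bigl(\int_0^t \omega_k^*\,d\tau\bigr)$ equals the orthogonal height of $\Psi_{t,w}\mathrm{col}_k\gamma$ over $\Psi_{t,w}(V_{k-1})$. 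Since $k > \dim T^s_w$, $T^s_w \subseteq V_{k-1}$, so $T^s_{t\cdot w} \subseteq \Psi_{t,w}(V_{k-1})$; moreover $\mathrm{col}_k\gamma \perp T^s_w$, so its $T^u_w$-component $x^u$ has norm bounded below by a uniform constant $\alpha_0 > 0$, where $\alpha_0$ is the uniform lower bound for the angle between $T^s$ and $T^u$ on $\Lambda$ (a standard consequence of uniform hyperbolicity with continuous splitting). Combining the unstable expansion $\|\Psi_{t,w}x^u\| \ge Ce^{-\lambda t}\|x^u\|$ with a second application of the angle bound at $t\cdot w$ after projecting parallel to $T^s_{t\cdot w}$ yields a height of order at least $e^{-\lambda t}$ up to a multiplicative constant; taking logarithms produces the required lower bound.

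The main obstacle is the unstable case when $k - \dim T^s_w \ge 2$: the orthogonal complement of $\Psi_{t,w}(V_{k-1})$ inside $\Psi_{t,w}(V_k)$ involves the images of previously chosen unstable-side columns and is no longer transparently related to $T^u_{t\cdot w}$. Handling this cleanly requires a block-triangular analysis of $\Psi_{t,w}$ relative to $T^s \oplus T^u$, together with a uniform estimate for the ``angle factor'' relating orthonormal bases of $\mathbb{T}$ to concatenated bases of $T^s, T^u$; both ingredients follow from the uniform hyperbolicity constants and continuity of the splitting on $\Lambda$.
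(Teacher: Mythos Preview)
Your approach is genuinely different from the paper's, which simply invokes Lemma~\ref{lem2.2} together with \cite[Lemma~3.7]{L74} and gives no further argument. Your direct proof via the upper-triangular structure of $C_{\gamma_w}^*(t)$ is more self-contained, and the reduction to $t_0=0$ and the stable case $1\le k\le \dim T_w^s$ are handled correctly: the decoupling of the $k$-th coordinate is exactly right, and the hyperbolic bound on $\|\Psi_{t,w}\mathrm{col}_k\gamma\|$ immediately gives the estimate.

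Two points deserve tightening. First, the uniform angle bound between $T^s$ and $T^u$ is \emph{not} a consequence of continuity of the splitting alone, since $\Lambda$ is not assumed compact; you should instead derive it from the uniform constants $C,\lambda$ together with the uniform bound $\|\Psi_{t,w}\|\le e^{\eta_\Lambda|t|}$ supplied by Lemma~\ref{lem2.2}(1). Second, your treatment of the unstable range when $k-\dim T_w^s\ge 2$ is only a sketch. It can be completed cleanly as follows. Writing $p_-=\dim T_w^s$ and $x_j^u=P^u(\mathrm{col}_j\gamma)$ for $j>p_-$ (projection onto $T_w^u$ along $T_w^s$), observe that for $v\in(T_w^s)^\perp$ one has $\|P^sv\|\le\|P^uv\|$ and hence $\|P^uv\|\ge\tfrac12\|v\|$; applying this to $v=\mathrm{col}_k\gamma-\sum_{p_-<j<k}c_j\,\mathrm{col}_j\gamma$ and using orthonormality gives $\mathrm{dist}_{T_w^u}\bigl(x_k^u,\mathrm{span}\{x_j^u:p_-<j<k\}\bigr)\ge\tfrac12$, uniformly in $(w,\gamma)$. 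Since $\Psi_{t,w}|_{T_w^u}$ expands every vector by at least $Ce^{-\lambda t}$, and since the uniform angle bound makes the orthogonal distance from a vector in $T_{t\cdot w}^u$ to any subspace containing $T_{t\cdot w}^s$ comparable to its distance within $T_{t\cdot w}^u$, your height identity then yields $\exp\int_0^t\omega_k^*\ge c\,e^{-\lambda t}$ with $c$ independent of $(w,\gamma)$. With these two clarifications your argument is complete and gives what the paper obtains by citation.
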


\begin{proof}
The statement comes immediately from Lemma~\ref{lem2.2} and
\cite[Lemma~3.7]{L74}.
\end{proof}

\subsection{}For a constant $c>0$, let
$\mathbb R_c^{n-1}=\{y\in\mathbb R^{n-1};\,\|y\|<c\}$. Fix any $w\in
\Lambda$. For any $\gamma\in\mathscr F_{n-1,w}^{*\sharp}$, we need
the $C^1$-mapping
\begin{equation*}
\mathcal
P_{w,\gamma}^*\colon\mathbb{R}\times\mathbb{R}^{n-1}\rightarrow\mathbb{E}
\end{equation*}
defined by
$$
\mathcal P_{w,\gamma}^*(t,y)=t_\cdot
w+({\chi_{t,w}^{*\sharp}}\gamma) y\in\Sigma_{t_\cdot
w}\quad\forall\,(t,y)\in\mathbb{R}\times\mathbb{R}^{n-1}.
$$
It is known~\cite[Lemma~5.1]{Dai07} that there is a constant
$\mathfrak c>0$, which is independent of
$(w,\gamma)\in\mathscr{F}_{n-1}^{*\sharp}(\Lambda)$, such that
$\mathcal P_{w,\gamma}^*$ is locally diffeomorphic on $\mathbb
R\times\mathbb R_\mathfrak c^{n-1}$. In fact, according to
\cite{Dai07} there is some $\epsilon>0$ so that for any
$w\in\Lambda$, $\mathcal P_{w,\gamma}^*$ is diffeomorphic from
$(-\epsilon,\epsilon)\times\mathbb{R}_\mathfrak c^{n-1}$ into
$\mathbb{E}$.

Given any $(w,\gamma)\in\mathscr F_{n-1}^{*\sharp}(\Lambda)$. Define
a $C^0$-vector field on $\mathbb{R}\times\mathbb{R}_{\mathfrak
c}^{n-1}$
$$\widehat{S}_{w,{\gamma}}\colon\mathbb{R}\times\mathbb{R}_{\mathfrak
c}^{n-1}\rightarrow\mathbb R^n$$ with
$\widehat{S}_{w,{\gamma}}(t,\textbf{0})=(1,
\textbf{0})^\textrm{T}\in\mathbb{R}\times\mathbb{R}^{n-1}$ in the
following way:
$$
\left(D_{(t,y)}\mathcal
P_{w,\gamma}^*\right)\widehat{S}_{w,{\gamma}}(t,y)=S(\mathcal
P_{w,\gamma}^*(t,y))\quad\forall\,(t,y)\in\mathbb{R}\times\mathbb{R}_{\mathfrak
c}^{n-1}.
$$
Since $\mathcal P_{w,\gamma}^*$ is locally $C^1$-diffeomorphic,
$\widehat{S}_{w,{\gamma}}(t,y)$ is well defined. We now consider the
autonomous system
\begin{subequations}\label{eq2.7}
\begin{align}
\frac{d}{d\mathbbm{t}}\left(\begin{matrix} t\\
y
\end{matrix}\right)&=\widehat{S}_{w,{\gamma}}(t,y)\quad(t,y)\in\mathbb{R}\times\mathbb{R}_{\mathfrak
c}^{n-1}\label{Req2.7a}\\
\intertext{and write}
\widehat{S}_{w,{\gamma}}(t,y)&=\left(\widehat{S}_{w,{\gamma}}^0(t,y),\ldots,\widehat{S}_{w,{\gamma}}^{n-1}(t,y)\right)^{\textrm{T}}\in\mathbb
R\times\mathbb R^{n-1}.\label{eq2.7b}
\end{align}
\end{subequations}
Next, put
\begin{equation}\label{eq2.8}
{S}_{w,{\gamma}}^*(t,y)=\left(\frac{\widehat{S}_{\mathfrak
w,{\gamma}}^1(t,y)}{\widehat{S}_{w,{\gamma}}^0(t,y)},\ldots,\frac{\widehat{S}_{w,{\gamma}}^{n-1}(t,y)}{\widehat{S}_{w,{\gamma}}^0(t,y)}\right)^{\textrm{T}}\in\mathbb
R^{n-1} \quad\forall\,(t,y)\in\mathbb R\times\mathbb R_{\mathfrak
c}^{n-1}.
\end{equation}

\begin{defn}[\cite{Dai07}]\label{def2.4}
The non-autonomous differential equation
\begin{equation*}
\dot{y}={S}_{w,{\gamma}}^*(t,y)\quad(t,y)\in\mathbb R\times\mathbb
R_{\mathfrak c}^{n-1}\leqno{({S}_{w,{\gamma}}^*)}
\end{equation*}
is called the \emph{reduced standard system of $S$ under the base
$(w,{\gamma})\in\mathscr F_{n-1}^{*\sharp}(\Lambda)$}.
\end{defn}

Is is easy to see that
\begin{equation}\label{eq2.9}
{S}_{w,{\gamma}}^*(t+t_1,y)={S}_{\chi^{*\sharp}(t,(w,{\gamma}))}^*(t_1,y)\quad\forall\,(t,y)\in\mathbb
R\times\mathbb R_{\mathfrak c}^{n-1}.
\end{equation}
For convenience of our later discussion, we write
\begin{equation}\label{eq2.10}
\mathcal {P}_{w,\gamma}^*(t,y)=t_\cdot w+\widehat{\mathcal
{P}}_{w,\gamma}^*(t,y),\textrm{ where }\widehat{\mathcal
{P}}_{w,\gamma}^*(t,y)\in\mathbb{T}_{t_\cdot w}.
\end{equation}
The following is important for our later arguments.

\begin{lem}[{\cite{Dai07}}]\label{lem2.5}
Under the conditions (U1), (U2) and (U3), the following statements
hold: for any $(w,\gamma)\in\mathscr{F}_{n-1}^{*\sharp}(\Lambda)$
\begin{enumerate}
\item ${S}_{w,{\gamma}}^*(t,\mathbf{0})=\mathbf{0}\in\mathbb R^{n-1}$ for all $t\in\mathbb R$,
and ${S}_{w,{\gamma}}^*(t,y)$ is continuous with respect to
$(t,y)\in\mathbb R\times\mathbb R_{\mathfrak c}^{n-1}$.

\item For any $(\bar{t},\bar{y})\in\mathbb R\times\mathbb R_{\mathfrak
c}^{n-1}$, let $\bar{w}=\bar{t}_\cdot w+\bar{x}={\mathcal
P}_{w,\gamma}^*(\bar{t},\bar{y})\in\Sigma_{\bar{t}_\cdot w}$ and
$$
y^*(t)=y_{w,{\gamma}}^*(t;\bar{t},\bar{y})\quad
t\in(r^\prime,r^{\prime\prime})\textrm{ where }
\bar{t}\in(r^\prime,r^{\prime\prime}),
$$
be the solution of $({S}_{w,{\gamma}}^*)$ with
$y^*(\bar{t})=\bar{y}$. Then
$$
\psi(t-\bar{t},\bar{w})={\mathcal
P}_{w,\gamma}^*(t,y^*(t))\in\Sigma_{t_\cdot
w}\quad(r^\prime<t<r^{\prime\prime}).
$$

\item ${S}_{w,{\gamma}}^*(t,y)$ is of class $C^1$ with respect to $y\in\mathbb
R_{\mathfrak c}^{n-1}$ such that
$$
{\partial {S}_{w,{\gamma}}^*(t,y)}/{\partial
y}\to{R}_{w,{\gamma}}^*(t)\textrm{ as }y\to\mathbf{0}
$$
uniformly for $(t,(w,\gamma))\in\mathbb
R\times\mathscr{F}_{n-1}^{*\sharp}(\Lambda)$.
\end{enumerate}
\end{lem}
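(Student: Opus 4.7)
The plan is to extract all three assertions from one structural fact: by its defining relation, $\mathcal{P}^*_{w,\gamma}$ carries the auxiliary vector field $\widehat{S}_{w,\gamma}$ on the tube $\mathbb{R}\times\mathbb{R}^{n-1}_{\mathfrak{c}}$ to the restriction of $S$ on the image, and on the axis $\{y=\mathbf{0}\}$ that image is precisely the orbit $t_\cdot w$. The passage from $\widehat{S}_{w,\gamma}$ to ${S}^*_{w,\gamma}$ is the corresponding reparametrization with $t$ as the independent variable, which is legitimate as long as $\widehat{S}^0_{w,\gamma}$ stays positive. With this observation, the three items reduce respectively to evaluation on $y=\mathbf{0}$, to the chain rule applied to orbits of $\phi$, and to linearization at $y=\mathbf{0}$.

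For (1), differentiating $\mathcal{P}^*_{w,\gamma}(t,\mathbf{0})=t_\cdot w$ in $t$ gives $S(t_\cdot w)=(D_{(t,\mathbf{0})}\mathcal{P}^*_{w,\gamma})(1,\mathbf{0})^{\mathrm{T}}$, so the defining equation of $\widehat{S}_{w,\gamma}$ forces $\widehat{S}_{w,\gamma}(t,\mathbf{0})=(1,\mathbf{0})^{\mathrm{T}}$. In particular $\widehat{S}^0_{w,\gamma}(t,\mathbf{0})=1$, whence ${S}^*_{w,\gamma}(t,\mathbf{0})=\mathbf{0}$; joint continuity of $\widehat{S}_{w,\gamma}$ (itself immediate from that of $S$ and of the inverse of the Jacobian of $\mathcal{P}^*_{w,\gamma}$) then propagates to the quotient, the tube radius $\mathfrak{c}$ being chosen uniformly in the base through (U1)--(U3). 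For (2), I would pull back the orbit $s\mapsto\phi(s,\bar w)$ through the local diffeomorphism $\mathcal{P}^*_{w,\gamma}$ to obtain an integral curve of (\ref{Req2.7a}) through $(\bar t,\bar y)$; reparametrizing by $t$ (possible since $\widehat{S}^0_{w,\gamma}>0$ near the axis) yields a solution $y^*$ of $({S}^*_{w,\gamma})$. Since $\mathcal{P}^*_{w,\gamma}(t,y^*(t))\in\Sigma_{t_\cdot w}$ by construction and since $\psi$ is defined as the next hit of the section, the identification $\psi(t-\bar t,\bar w)=\mathcal{P}^*_{w,\gamma}(t,y^*(t))$ follows on the interval of existence.

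For (3), the $C^1$-regularity in $y$ comes from differentiating the identity $\widehat{S}_{w,\gamma}(t,y)=(D_{(t,y)}\mathcal{P}^*_{w,\gamma})^{-1}S(\mathcal{P}^*_{w,\gamma}(t,y))$ and using that $S$ is $C^1$ while $\mathcal{P}^*_{w,\gamma}$ is $C^1$-diffeomorphic on $\mathbb{R}\times\mathbb{R}^{n-1}_{\mathfrak{c}}$. To identify $\partial_y {S}^*_{w,\gamma}(t,\mathbf{0})$ with $R^*_{\gamma_w}(t)$, observe that the variational equation of $({S}^*_{w,\gamma})$ along the zero solution has fundamental matrix equal, via (2) and $\mathcal{P}^*_{w,\gamma}$, to the expression of $\Psi_{t,w}$ in the moving frame $\chi^{*\sharp}(t,\gamma_w)$; by (\ref{eq2.3})--(\ref{eq2.4}) this is $C^*_{\gamma_w}(t)$, and by (\ref{eq2.6}) its infinitesimal generator is $R^*_{\gamma_w}(t)$, so $\partial_y {S}^*_{w,\gamma}(t,\mathbf{0})=R^*_{\gamma_w}(t)$.

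The principal difficulty lies in the uniformity at the end of (3). Pointwise convergence $\partial_y {S}^*_{w,\gamma}(t,y)\to R^*_{\gamma_w}(t)$ as $y\to\mathbf{0}$ is automatic from differentiability, but promoting it to a modulus independent of $(t,(w,\gamma))\in\mathbb{R}\times\mathscr{F}_{n-1}^{*\sharp}(\Lambda)$ requires uniform $C^1$ control of the coordinate map $\mathcal{P}^*_{w,\gamma}$ and of its inverse on the whole tube. This is precisely where (U1)--(U3) intervene: (U1) and (U2) ensure that the Jacobian of $\mathcal{P}^*_{w,\gamma}$ is uniformly bounded and uniformly invertible in the base, while the uniform continuity of $S'$ on $\Lambda$ in (U3), composed with $\mathcal{P}^*_{w,\gamma}$, yields a uniform modulus of continuity of $\partial_y \widehat{S}_{w,\gamma}$ in $y$, hence of $\partial_y {S}^*_{w,\gamma}$, giving the desired uniform limit.
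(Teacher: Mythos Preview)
The paper does not supply its own proof of Lemma~\ref{lem2.5}; it is quoted from \cite{Dai07} and stated without argument. So there is nothing to compare against directly, and your sketch must stand on its own. On the whole it does: the organizing idea---that $\mathcal{P}^*_{w,\gamma}$ intertwines $\widehat{S}_{w,\gamma}$ with $S$, and that $S^*_{w,\gamma}$ is the reparametrization of $\widehat{S}_{w,\gamma}$ by the $t$-coordinate---is exactly right, and items (1) and (2) follow from it as you describe.

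There is one genuine imprecision in your argument for (3). You justify the $C^1$-regularity of $\widehat{S}_{w,\gamma}$ in $y$ by ``differentiating the identity $\widehat{S}_{w,\gamma}(t,y)=(D_{(t,y)}\mathcal{P}^*_{w,\gamma})^{-1}S(\mathcal{P}^*_{w,\gamma}(t,y))$ and using that $S$ is $C^1$ while $\mathcal{P}^*_{w,\gamma}$ is $C^1$-diffeomorphic.'' But a $C^1$ diffeomorphism has only a $C^0$ Jacobian, so $(D\mathcal{P}^*)^{-1}$ is \emph{a priori} only continuous and cannot be differentiated on those grounds alone. The paper flags exactly this point just before Lemma~\ref{lem2.9} (``Although $\mathcal{P}^*_{w,\gamma}$ is only $C^1$, we can obtain more\ldots''). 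What actually saves you is the explicit form $\mathcal{P}^*_{w,\gamma}(t,y)=t_\cdot w+(\chi^{*\sharp}_{t,w}\gamma)y$: this is \emph{affine} in $y$, so the full Jacobian $J_{w,\gamma}(t,y)$ is affine in $y$ (its first column is $S(t_\cdot w)+\tfrac{d}{dt}(\chi^{*\sharp}_{t,w}\gamma)\,y$, the remaining columns are $\chi^{*\sharp}_{t,w}\gamma$), hence $J^{-1}$ is smooth in $y$ wherever it exists. With that observation your differentiation goes through, and the same affine structure also underlies the uniform estimates you invoke at the end. You should make this explicit; as written, the stated reason is insufficient.
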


\noindent From here on, for any $w\in \Lambda$ we will rewrite
($S_{w,\gamma}^*$) as
\begin{subequations}\label{eq2.11}
\begin{align}
&\dot{y}=R_{w,\gamma}^*(t)y+S_{\textsl{rem}(w,\gamma)}^*(t,y)\quad(t,y)\in\mathbb R\times\mathbb R_{\mathfrak c}^{n-1}\label{eq2.11a}\\
\intertext{where}&S_{\textsl{rem}(w,\gamma)}^*(t,y)=S_{w,\gamma}^*(t,y)-R_{w,\gamma}^*(t)y.\label{eq2.11b}
\end{align}
\end{subequations}
Then, we have the following result.

\begin{lem}[{\cite{Dai07}}]\label{lem2.6}
Under the conditions (U1), (U2) and (U3), to any $\kappa>0$, there
is some $\xi\in(0,\mathfrak c]$ so that
$$
\|S_{\textsl{rem}(w,\gamma)}^*(t,y)-S_{\textsl{rem}(w,\gamma)}^*(t,y^\prime)\|\le
\kappa\|y-y^\prime\|\quad \textrm{whenever }
y,y^\prime\in\mathbb{R}_\xi^{n-1}
$$
holds uniformly for $(t,(w,\gamma))\in\mathbb
R\times\mathscr{F}_{n-1}^{*\sharp}(\Lambda)$.
\end{lem}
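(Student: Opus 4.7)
The plan is to reduce the lemma to the uniform differentiability statement in Lemma~\ref{lem2.5}(3) via the mean value theorem, exploiting the fact that $\mathbb{R}_\xi^{n-1}$ is a convex neighborhood of the origin.

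First I would differentiate the remainder defined in (\ref{eq2.11b}) with respect to $y$. From $S_{\textsl{rem}(w,\gamma)}^*(t,y)=S_{w,\gamma}^*(t,y)-R_{w,\gamma}^*(t)y$ and the $C^1$-dependence of $S_{w,\gamma}^*$ on $y$ (Lemma~\ref{lem2.5}(3)), it follows that
$$
\frac{\partial S_{\textsl{rem}(w,\gamma)}^*(t,y)}{\partial y}=\frac{\partial S_{w,\gamma}^*(t,y)}{\partial y}-R_{w,\gamma}^*(t).
$$
By Lemma~\ref{lem2.5}(3), the right-hand side tends to $\mathbf{0}$ as $y\to\mathbf{0}$ \emph{uniformly} in $(t,(w,\gamma))\in\mathbb{R}\times\mathscr{F}_{n-1}^{*\sharp}(\Lambda)$. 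Therefore, given any $\kappa>0$, there exists $\xi\in(0,\mathfrak c]$ such that
$$
\left\|\frac{\partial S_{\textsl{rem}(w,\gamma)}^*(t,y)}{\partial y}\right\|\le\kappa\qquad\forall\,y\in\mathbb{R}_\xi^{n-1},\ \forall\,t\in\mathbb{R},\ \forall\,(w,\gamma)\in\mathscr{F}_{n-1}^{*\sharp}(\Lambda).
$$

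Next, for arbitrary $y,y'\in\mathbb{R}_\xi^{n-1}$, convexity of the ball guarantees that $sy+(1-s)y'\in\mathbb{R}_\xi^{n-1}$ for every $s\in[0,1]$. The fundamental theorem of calculus gives
$$
S_{\textsl{rem}(w,\gamma)}^*(t,y)-S_{\textsl{rem}(w,\gamma)}^*(t,y')=\int_0^1\frac{\partial S_{\textsl{rem}(w,\gamma)}^*(t,sy+(1-s)y')}{\partial y}(y-y')\,ds,
$$
and taking norms together with the previous estimate yields the desired Lipschitz bound
$$
\|S_{\textsl{rem}(w,\gamma)}^*(t,y)-S_{\textsl{rem}(w,\gamma)}^*(t,y')\|\le\kappa\|y-y'\|,
$$
uniformly in $(t,(w,\gamma))$.

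The only substantive point is the uniformity of the Jacobian bound, which is precisely where the hypotheses (U1), (U2), (U3) enter: condition (U2) keeps the Poincar\'{e} cross-section parametrization $\mathcal{P}_{w,\gamma}^*$ nondegenerate on a ball of radius $\mathfrak c$ independent of $w$, (U1) bounds $R_{w,\gamma}^*(t)$ uniformly (Lemma~\ref{lem2.2}(1)), and (U3) supplies the uniform continuity of $S'$ needed for the uniform convergence of $\partial S_{w,\gamma}^*(t,y)/\partial y$ to $R_{w,\gamma}^*(t)$ as $y\to\mathbf{0}$. Once those uniform inputs (already packaged inside Lemma~\ref{lem2.5}(3)) are invoked, the remainder of the argument is a routine application of the mean value theorem on a convex domain, so no serious obstacle remains.
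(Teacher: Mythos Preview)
Your argument is correct and is exactly the natural one: differentiate the remainder, invoke the uniform convergence in Lemma~\ref{lem2.5}(3) to bound the Jacobian by $\kappa$ on a small ball, and then apply the mean value theorem on the convex set $\mathbb{R}_\xi^{n-1}$. Note that the paper does not actually prove Lemma~\ref{lem2.6}; it is quoted from \cite{Dai07}. However, the identical mechanism you describe is what the paper uses a few lines later in proving statement~(3) of Theorem~\ref{thm2.13}: there the authors bound $\|\partial S_{w,\gamma}^*(t,y)/\partial y-R_{w,\gamma}^*(t)\|$ on a small ball and then appeal to the mean value theorem. So your proposal matches the paper's own approach to the companion result.
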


\subsection{}

In what follows, we let $V\colon \mathbb{E}\rightarrow\mathbb{R}^n$
be an arbitrarily given another $C^1$ vector field on $\mathbb{E}$.
Note here that $(\mathscr
F_{n-1}^{*\sharp}(\Lambda),\chi^{*\sharp})$ still corresponds to
$S$.

In order to introduce the standard systems of $V$ associated with
$S$, let us consider firstly a simple lemma.

\begin{lem}\label{lem2.7}
Let $h\colon \widehat{N}\rightarrow N$ be a map of class $C^1$ from
a $C^1$ manifold $\widehat{N}$ into another $C^1$ manifold $N$. Let
$\widehat{X}$ and $X$ be $C^0$ vector fields on $\widehat{N}$ and
$N$, respectively. If $(Dh)\widehat{X}=X$ then for any
$\hat{p}\in\widehat{N}$, $h$ maps the integral curve
$\phi_{\hat{\textsl{x}}}(t,\hat{p})$ of $\widehat{X}$ into an
integral curve $\phi_{\textsl{x}}(t,h(\hat{p}))$ of $X$ such that
$\phi_{\textsl{x}}(t,h(\hat{p}))=h(\phi_{\hat{\textsl{x}}}(t,\hat{p}))$.
\end{lem}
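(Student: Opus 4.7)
The plan is to verify this by a direct chain-rule calculation; the content is just that $h$-relatedness of vector fields transports integral curves.

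First I would fix $\hat p\in\widehat N$ and let $\hat\gamma(t)=\phi_{\hat{\textsl{x}}}(t,\hat p)$ denote the maximal integral curve of $\widehat X$ starting at $\hat p$, so that by definition $\hat\gamma(0)=\hat p$ and $\dot{\hat\gamma}(t)=\widehat X(\hat\gamma(t))$ on its domain. Then I would define the candidate curve $\gamma(t):=h(\hat\gamma(t))$ in $N$. Clearly $\gamma(0)=h(\hat p)$, so the initial condition matches.

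Next I would differentiate $\gamma$ by the chain rule, which is legitimate because $h$ is $C^1$ and $\hat\gamma$ is $C^1$ (the latter because $\widehat X$ is $C^0$ and $\hat\gamma$ satisfies the ODE $\dot{\hat\gamma}=\widehat X\circ\hat\gamma$, making $\dot{\hat\gamma}$ continuous). This gives
\begin{equation*}
\dot\gamma(t)=D_{\hat\gamma(t)}h\cdot\dot{\hat\gamma}(t)=D_{\hat\gamma(t)}h\cdot\widehat X(\hat\gamma(t))=X(h(\hat\gamma(t)))=X(\gamma(t)),
\end{equation*}
where the third equality uses the hypothesis $(Dh)\widehat X=X$ evaluated at $\hat\gamma(t)$. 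Therefore $\gamma$ is an integral curve of $X$ starting at $h(\hat p)$, which is exactly to say that $\phi_{\textsl{x}}(t,h(\hat p))=h(\phi_{\hat{\textsl{x}}}(t,\hat p))$ on the common domain of definition.

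There is essentially no obstacle; the only subtle point worth a remark is that since $X$ is merely $C^0$, integral curves of $X$ need not be unique, so the conclusion should be read as asserting that the specific curve $t\mapsto h(\hat\gamma(t))$ is \emph{one} integral curve of $X$ through $h(\hat p)$ — which is what the statement already says by writing $\phi_{\textsl{x}}(t,h(\hat p))$ for this distinguished choice. No appeal to a uniqueness theorem is needed, and no further estimates are required.
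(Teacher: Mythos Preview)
Your proof is correct and follows essentially the same chain-rule argument as the paper's own proof: define the pushed-forward curve $\gamma(t)=h(\phi_{\hat{\textsl{x}}}(t,\hat p))$, differentiate using $(Dh)\widehat X=X$, and conclude that $\gamma$ is an integral curve of $X$ with the right initial condition. Your added remark about non-uniqueness for $C^0$ vector fields is a welcome clarification that the paper leaves implicit.
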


\begin{proof}
Let $h(\hat{p})=p$. Define a $C^1$ curve in $N$ by $C\colon t\mapsto
h(\phi_{\hat{\textsl{x}}}(t,\hat{p}))$. Since
$$\frac{d}{dt}\phi_{\hat{\textsl{x}}}(t,\hat{p})=\widehat{X}(\phi_{\hat{\textsl{x}}}(t,\hat{p}))
\textrm{ and }
(Dh)\widehat{X}(\phi_{\hat{\textsl{x}}}(t,\hat{p}))=X(C(t))=\frac{d}{dt}C(t),
$$
we get that $C(t)$ is an integral curve of $X$ satisfying the
initial condition $C(0)=p$. Now put $\phi_\textsl{x}(t,p)=C(t)$,
which satisfies the requirement of Lemma~\ref{lem2.7}.
\end{proof}

Particularly, we will be interesting to the case where
$\widehat{N}=\mathbb R\times\mathbb R_\mathfrak c^{n-1},
N=\mathbb{E}$ and $h=\mathcal P_{w,\gamma}^*$ and $X=V$ for any
given $(w,\gamma)\in\mathscr F_{n-1}^{*\sharp}(\Lambda)$.
Correspondingly, there $\widehat{X}$ is right the so-called lifting
system that we are going to define.

\begin{defn}\label{def2.8}
Given any $(w,\gamma)\in\mathscr F_{n-1}^{*\sharp}(\Lambda)$. Define
a $C^0$-vector field $$\widehat{V}_{w,\gamma}\colon\mathbb
R\times\mathbb R_\mathfrak c^{n-1}\rightarrow\mathbb R^n$$ in the
following way:
$$
\left(D_{(t,y)}\mathcal
P_{w,\gamma}^*\right)\widehat{V}_{w,\gamma}(t,y)=V(\mathcal
P_{w,{\gamma}}^*(t,y))\quad\forall\,(t,y)\in\mathbb R\times\mathbb
R_\mathfrak c^{n-1}.
$$
Then, the autonomous differential equation
\begin{subequations}\label{eq2.12}
\begin{align}
\frac{d}{d\mathbbm{t}}\left(\begin{matrix} t\\
y
\end{matrix}\right)&=\widehat{V}_{w,{\gamma}}(t,y)\quad\mathbbm{t}\in\mathbb R, (t,y)\in\mathbb
R\times\mathbb R_\mathfrak c^{n-1}\label{eq2.12a}
\end{align}
\end{subequations}
is referred to as a \emph{lifting} of $V$ under the moving frames
$(\chi^{*\sharp}(t,(w,{\gamma})))_{t\in\mathbb R}$.
\end{defn}

Write
$$
\widehat{V}_{w,{\gamma}}(t,y)=\left(\widehat{V}_{w,{\gamma}}^0(t,y),\ldots,\widehat{V}_{w,{\gamma}}^{n-1}(t,y)\right)^{\textrm{T}}\in\mathbb
R\times\mathbb R^{n-1}.
$$
Clearly, it follows from $\mathcal P_{w,\gamma}^*(t,y)=\mathcal
P_{\chi^{*\sharp}(t,(w,\gamma))}^*(0,y)$ that
\begin{equation}\label{eq2.13}
\widehat{V}_{w,{\gamma}}(t,y)=\widehat{V}_{\chi^{*\sharp}(t,(w,{\gamma}))}(0,y)\quad\forall\,(t,y)\in\mathbb
R\times\mathbb R_\mathfrak c^{n-1}.
\end{equation}
Although $\mathcal P_{w,{\gamma}}^*$ is only $C^1$, we can obtain
more about the regularity of $\widehat{V}_{w,{\gamma}}(t,y)$ with
respect to $y\in\mathbb R_\mathfrak c^{n-1}$ as long as $V$ is
$C^1$.

\begin{lem}\label{lem2.9}
Given any $(w,{\gamma})\in\mathscr F_{n-1}^{*\sharp}(\Lambda)$, the
lifting $\widehat{V}_{w,{\gamma}}(t,y)$ is of class $C^1$ in $y$;
precisely, for $1\le i\le {n-1}$,
$\partial\widehat{V}_{w,{\gamma}}(t,y)/{\partial y^i}$ makes sense
and is continuous with respect to $(t,y,(w,{\gamma}))$ in $\mathbb
R\times\mathbb R_\mathfrak c^{n-1}\times\mathscr
F_{n-1}^{*\sharp}(\Lambda)$.
\end{lem}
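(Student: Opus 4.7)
The plan is to exploit the fact that although $\mathcal P^*_{w,\gamma}$ is only jointly $C^1$ on $\mathbb R\times\mathbb R_\mathfrak c^{n-1}$, it is \emph{affine}---hence $C^\infty$---in the variable $y$ alone. Writing $M(t):=\chi_{t,w}^{*\sharp}\gamma$ as an $n\times(n-1)$ matrix, we have
$$\mathcal P^*_{w,\gamma}(t,y)=t_\cdot w+M(t)y,$$
and this extra $y$-smoothness is what will let us read $C^1$-dependence in $y$ off the defining implicit relation $(D_{(t,y)}\mathcal P^*_{w,\gamma})\widehat V_{w,\gamma}(t,y)=V(\mathcal P^*_{w,\gamma}(t,y))$, despite the underlying flow being only $C^1$ in $t$.

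Since \cite[Lemma~5.1]{Dai07} makes $\mathcal P^*_{w,\gamma}$ a local diffeomorphism on $\mathbb R\times\mathbb R_\mathfrak c^{n-1}$, I first invert to obtain
$$\widehat V_{w,\gamma}(t,y)=\bigl(D_{(t,y)}\mathcal P^*_{w,\gamma}\bigr)^{-1}V\bigl(\mathcal P^*_{w,\gamma}(t,y)\bigr),$$
and then analyze the two factors separately. Direct computation shows that the last $n-1$ columns of $D\mathcal P^*$ are just the columns of $M(t)$, free of $y$, while its first column is $S(t_\cdot w)+M'(t)y$, affine in $y$; every entry of $D\mathcal P^*$ is therefore at most linear in $y$, so its inverse is $C^\infty$ in $y$ (indeed rational) with continuous dependence on $(t,(w,\gamma))$---the continuity of $M$ and $M'$ in $(t,(w,\gamma))$ being inherited from the $C^1$-skew-product structure of $\chi^{*\sharp}$ and the continuity statement in Lemma~\ref{lem2.2}(1).

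For the second factor, since $V\in\mathfrak{X}^1(\mathbb E)$ and $y\mapsto\mathcal P^*_{w,\gamma}(t,y)$ is affine, the chain rule gives that $V\circ\mathcal P^*_{w,\gamma}$ is $C^1$ in $y$ with $\partial(V\circ\mathcal P^*)/\partial y^i=DV(\mathcal P^*(t,y))\,\mathrm{col}_i M(t)$, jointly continuous in $(t,y,(w,\gamma))$. Applying the product rule to $\widehat V_{w,\gamma}=(D\mathcal P^*)^{-1}(V\circ\mathcal P^*)$ then produces an explicit formula for $\partial\widehat V_{w,\gamma}/\partial y^i$ built from $M$, $M'$, $S$, $V$, $DV$, each continuous in its arguments, yielding the required joint continuity. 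The main obstacle is essentially bookkeeping: a careless appeal to the implicit function theorem would only return $C^0$-regularity in $y$, matching the joint $C^1$-regularity of $\mathcal P^*$; the gain comes entirely from the $y$-affine structure of $\mathcal P^*_{w,\gamma}$, and the proof simply needs to isolate and exploit it.
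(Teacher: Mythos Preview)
Your argument is correct and is essentially what the paper's one-line proof gestures at: the paper merely says the statement ``comes immediately from the regularity of $\mathcal P_{w,\gamma}^*(t,y)$, as the argument of \cite[Lemma~5.3]{Dai07},'' and your explicit exploitation of the affine-in-$y$ structure of $\mathcal P^*_{w,\gamma}$ together with the formula $\widehat V_{w,\gamma}=(D\mathcal P^*)^{-1}(V\circ\mathcal P^*)$ is exactly that regularity made concrete. One minor quibble: the joint continuity of $M'(t)$ in $(t,(w,\gamma))$ is not literally the content of Lemma~\ref{lem2.2}(1), which concerns $R^*_{w,\gamma}$; it follows instead from the $C^1$ construction of $\chi^{*\sharp}$ in \cite{Dai07}, as the paper itself invokes in the proof of Lemma~\ref{lem2.10}.
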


\begin{proof}
The statement comes immediately from the regularity of $\mathcal
P_{w,\gamma}^*(t,y)$, as the argument of \cite[Lemma~5.3]{Dai07}.
\end{proof}

Next, let
\begin{equation*}
\{S,V\}_{\Lambda}^1=\sup_{\begin{subarray}{c}(t,y)\in\mathbb
R\times\mathbb
R_{\mathfrak c}^{n-1}\\
(w,\gamma)\in\mathscr
F_{n-1}^{*\sharp}(\Lambda)\end{subarray}}\left\{\|\widehat{S}_{w,\gamma}(t,y)-\widehat{V}_{w,\gamma}(t,y)\|
+\|\frac{\partial}{\partial
y}[\widehat{S}_{w,\gamma}(t,y)-\widehat{V}_{w,\gamma}(t,y)]\|\right\}.
\end{equation*}
From (\ref{eq2.13}) we get
\begin{equation*}
\{S,V\}_{\Lambda}^1=\sup_{\begin{subarray}{c}y\in\mathbb
R_{\mathfrak c}^{n-1}\\
(w,\gamma)\in\mathscr
F_{n-1}^{*\sharp}(\Lambda)\end{subarray}}\left\{\|\widehat{S}_{w,\gamma}(0,y)-\widehat{V}_{w,\gamma}(0,y)\|
+\|\frac{\partial}{\partial
y}[\widehat{S}_{w,\gamma}(0,y)-\widehat{V}_{w,\gamma}(0,y)]\|\right\}.
\end{equation*}
Then, we have

\begin{lem}\label{lem2.10}
There exists some constant $\flat_\Lambda>0$ such that
\begin{equation*}
\|S-V\|_1\ge\flat_\Lambda\{S,V\}_\Lambda^1\quad\forall\,V\in\mathfrak
X^1(\mathbb{E}).
\end{equation*}
\end{lem}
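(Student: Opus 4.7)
The plan is to exploit the fact that the defining relation $(D_{(t,y)}\mathcal{P}^{*}_{w,\gamma})\widehat V_{w,\gamma}(t,y)=V(\mathcal{P}^{*}_{w,\gamma}(t,y))$ makes the lifting operation $V\mapsto \widehat V_{w,\gamma}$ \emph{linear} in $V$. Writing $J(t,y):=D_{(t,y)}\mathcal{P}^{*}_{w,\gamma}$, subtraction gives
$$\widehat S_{w,\gamma}(t,y)-\widehat V_{w,\gamma}(t,y)=J(t,y)^{-1}\bigl[(S-V)\bigl(\mathcal{P}^{*}_{w,\gamma}(t,y)\bigr)\bigr],$$
and differentiating in $y^{i}$ with $\partial_{y^{i}}J^{-1}=-J^{-1}(\partial_{y^{i}}J)J^{-1}$ gives
$$\partial_{y^{i}}\bigl(\widehat S_{w,\gamma}-\widehat V_{w,\gamma}\bigr)=-J^{-1}(\partial_{y^{i}}J)J^{-1}\bigl[(S-V)\circ\mathcal{P}^{*}_{w,\gamma}\bigr]+J^{-1}\bigl[(S'-V')\circ\mathcal{P}^{*}_{w,\gamma}\bigr]\partial_{y^{i}}\mathcal{P}^{*}_{w,\gamma}.$$
Thus the entire lemma reduces to obtaining uniform bounds on $\|J^{-1}\|$, $\|\partial_{y}J\|$ and $\|\partial_{y}\mathcal{P}^{*}_{w,\gamma}\|$ over $\mathbb{R}\times\mathbb{R}^{n-1}_{\mathfrak c}\times\mathscr{F}^{*\sharp}_{n-1}(\Lambda)$.

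By the cocycle identity $\mathcal{P}^{*}_{w,\gamma}(t,y)=\mathcal{P}^{*}_{\chi^{*\sharp}(t,(w,\gamma))}(0,y)$ already invoked for (\ref{eq2.13}), each such quantity at $(t,y,w,\gamma)$ equals the corresponding one at $(0,y,\chi^{*\sharp}(t,(w,\gamma)))$, so it suffices to work at $t=0$. There $\mathcal{P}^{*}_{w,\gamma}(0,y)=w+\gamma y$, so $\partial_{y}\mathcal{P}^{*}_{w,\gamma}(0,y)=\gamma$ has orthonormal columns, $J(0,y)=[\,S(w)+\dot A(0)y\;|\;\gamma\,]$, and $\partial_{y^{i}}J(0,y)=[\,\mathrm{col}_{i}\dot A(0)\;|\;0\,]$, where $A(t):=\chi^{*\sharp}_{t,w}\gamma$. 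By Lemma~\ref{lem2.2} together with (U1) and the smoothness of Gram--Schmidt, $\|\dot A(0)\|$ is dominated by a constant depending only on $\eta_{\Lambda}$, which controls $\|\partial_{y}J\|$. At $y=\mathbf{0}$ the matrix $J(0,\mathbf{0})=[\,S(w)\,|\,\gamma\,]$ is block-orthogonal with first column of length $\|S(w)\|\in[\inf_{\Lambda}\|S\|,\sup_{\Lambda}\|S\|]$, both finite and strictly positive by (U2), so $\|J(0,\mathbf{0})^{-1}\|$ is uniformly bounded. Possibly shrinking $\mathfrak c$ so that $\mathfrak c\,\eta_{\Lambda}$ is small relative to $\inf_{\Lambda}\|S\|$, this uniform invertibility extends from $\mathbf{0}$ to all $y\in\mathbb{R}^{n-1}_{\mathfrak c}$.

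Feeding these three uniform bounds back into the two displayed identities yields a constant $M=M(\eta_{\Lambda},\inf_{\Lambda}\|S\|,\mathfrak c)>0$ with
$$\|\widehat S_{w,\gamma}(0,y)-\widehat V_{w,\gamma}(0,y)\|+\bigl\|\partial_{y}[\widehat S_{w,\gamma}-\widehat V_{w,\gamma}](0,y)\bigr\|\le M\,\|S-V\|_{1}$$
uniformly on $\mathbb{R}^{n-1}_{\mathfrak c}\times\mathscr{F}^{*\sharp}_{n-1}(\Lambda)$. Taking the supremum and using the $t$-independent reformulation of $\{S,V\}^{1}_{\Lambda}$ displayed right before the lemma gives $\{S,V\}^{1}_{\Lambda}\le M\|S-V\|_{1}$, so $\flat_{\Lambda}:=1/M$ works.

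The main obstacle is precisely the piece that would be automatic under compactness: making the estimates on $J^{-1}$ and $\partial_{y}J$ \emph{uniform} across the noncompact frame bundle $\mathscr{F}^{*\sharp}_{n-1}(\Lambda)$. The hypotheses (U1)--(U3) are tailored exactly to this---(U1) bounds $S'$ and hence the moving-frame derivative $\dot A(0)$, (U2) provides a positive lower bound on $\|S(w)\|$ that quantitatively separates $S(w)$ from the hyperplane spanned by $\gamma$, and (U3) supplies the uniform continuity needed to extend pointwise invertibility of $J(0,\mathbf{0})$ to an $(w,\gamma)$-independent ball of radius $\mathfrak c$. These are the same uniformities already underwriting Lemmas~\ref{lem2.5} and~\ref{lem2.6}, so the present argument is essentially an organizational task rather than a new technical one.
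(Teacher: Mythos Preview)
Your proof is correct and follows essentially the same approach as the paper's: reduce to $t=0$ via the cocycle identity, write $\widehat{S}_{w,\gamma}(0,y)-\widehat{V}_{w,\gamma}(0,y)=J_{w,\gamma}(y)^{-1}(S-V)(w+\gamma y)$ with $J_{w,\gamma}(y)=[S(w)+\dot A(0)y\,|\,\gamma]$, and then use (U1) (via the argument of \cite[Lemma~5.3]{Dai07}) to bound $\dot A(0)$ uniformly and (U2) to bound $J^{-1}$ uniformly. You supply more detail than the paper does---in particular the explicit $\partial_{y^i}$ formula and the block-orthogonal analysis of $J(0,\mathbf{0})$---but the structure is the same.
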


\begin{proof}
For any $(w,\gamma)\in\mathscr F_{n-1}^{*\sharp}(\Lambda)$ let
$$
J_{w,\gamma}(y)=\left.\frac{\partial\mathcal
P_{w,\gamma}^*(t,y)}{\partial (t,y)}\right|_{(0,y)}
$$
be the $n$-by-$n$ Jacobi matrix of $\mathcal P_{w,\gamma}^*(t,y)$ at
$(0,y)\in\mathbb{R}\times\mathbb{R}_{\mathfrak c}^{n-1}$. Then
\begin{align*}
\mathcal P_{w,\gamma}^*(0,y)&=w+\gamma y\\
\intertext{and}
J_{w,\gamma}(y)&=\left[S(w)+\left.\frac{d}{dt}\right|_{t=0}({\chi_{t,w}^{*\sharp}}\gamma)y,\,\gamma\right]_{n\times
n}.
\end{align*}
Thus, for any $y\in\mathbb{R}_{\mathfrak c}^{n-1}$ we have
$$
\widehat{S}_{w,\gamma}(0,y)-\widehat{V}_{w,\gamma}(0,y)={J_{w,\gamma}(y)}^{-1}(S-V)(w+\gamma
y)
$$
Moreover, from condition (U1) we can prove by the argument of
\cite[Lemma~5.3]{Dai07} that
$\left.\frac{d}{dt}\right|_{t=0}{\chi_{t,w }^{*\sharp}}\gamma$,
viewed as an $n$-by-$(n-1)$ matrix, is uniformly continuous and
bounded for any $(w,\gamma)\in\mathscr F_{n-1}^{*\sharp}(\Lambda)$.
Therefore, there is some constant $\flat_\Lambda>0$ which satisfies
the requirement of Lemma~\ref{lem2.10}.
\end{proof}

From Lemma~\ref{lem2.10}, condition (U1) and
$\widehat{S}_{w,\gamma}^0(t,\textbf{0})=1$, we may assume, without
any loss of generality replacing $\mathfrak c$ by a more small
positive constant if necessary, that
\begin{itemize}
\item $\exists\,\mathcal N_\textsl{s}$, a $C^1$-neighborhood of
$S$ in $\mathfrak X^1(\mathbb{E})$ such that: for any $V\in\mathcal
N_\textsl{s}$

\item
$\frac{1}{2}\le\widehat{V}_{w,\gamma}^0(t,y)\le\frac{4}{2}$ for any
$(t,y,{(w,\gamma)})\in\mathbb R\times\mathbb R_\mathfrak
c^{n-1}\times\mathscr F_{n-1}^{*\sharp}(\Lambda)$.
\end{itemize}
Thus, the following definition makes sense.

\begin{defn}\label{def2.11}
Given any $(V,(w,\gamma))\in\mathcal N_\textsl{s}\times\mathscr
F_{n-1}^{*\sharp}(\Lambda)$, set
\begin{equation*}
V_{w,\gamma}^*(
t,y)=\left(\frac{\widehat{V}_{w,\gamma}^1(t,y)}{\widehat{V}_{w,\gamma}^0(t,y)},\ldots,\frac{\widehat{V}_{w,\gamma}^{n-1}(
t,y)}{\widehat{V}_{w,\gamma}^0(t,x)}\right)^{\textrm{T}}\in\mathbb
R^{n-1} \quad\forall\,(t,y)\in\mathbb R\times\mathbb R_\mathfrak
c^{n-1}.
\end{equation*}
The non-autonomous differential equation
\begin{equation*}
\dot{y}=V_{w,\gamma}^*(t,y)\quad(t,y)\in\mathbb R\times\mathbb
R_\mathfrak c^{n-1}\leqno{(V_{w,\gamma}^*)}
\end{equation*}
is referred to as the \emph{standard system of $V$ associated to
$(S,(w,\gamma))$}.
\end{defn}

From (\ref{eq2.13}) we have
\begin{equation}\label{eq2.15}
V_{w,\gamma}^*(t+t^\prime,y)=V_{\chi^{*\sharp}(t,(w,\gamma))}^*(t^\prime,y)\quad\forall\,t,t^\prime\in\mathbb
R\textrm{ and }y\in\mathbb R_\mathfrak c^{n-1}.
\end{equation}
In what follows, we write $(V_{w,\gamma}^*)$ as
\begin{equation*}
\dot{y}=R_{w,\gamma}^*(t)y+V_{\textsl{rem}(w,\gamma)}^*(t,y)\quad(t,y)\in\mathbb
R\times\mathbb R_\mathfrak c^{n-1}\leqno{(V_{w,\gamma}^*)}
\end{equation*}
where
\begin{subequations}\label{eq2.16}
\begin{align}
V_{\textsl{rem}(w,{\gamma})}^*(t,y)&=V_{w,\gamma}^*(
t,y)-R_{w,\gamma}^*(t)y\label{eq2.16a}\\
\intertext{such that}
V_{\textsl{rem}(w,{\gamma})}^*(t+t^\prime,y)&=V_{\textsl{rem}(\chi^{*\sharp}(t,(w,\gamma)))}^*(t^\prime,y)\quad\forall\,t,t^\prime\in\mathbb
R.\label{eq2.16b}
\end{align}
\end{subequations}

Similar to Lemma~\ref{lem2.5}, we obtain the following result.

\begin{theorem}\label{thm2.12}
Given any $V\in\mathcal N_\textsl{s}$, the following statements
hold:
\begin{enumerate}
\item $V_{\textsl{rem}(w,{\gamma})}^*(t,y)$ and ${\partial V_{\textsl{rem}(w,\gamma)}^*(t,y)}/{\partial y}$
are continuous in $(t,y,{(w,\gamma)})\in\mathbb R\times\mathbb
R_\mathfrak c^{n-1}\times\mathscr F_{n-1}^{*\sharp}(\Lambda)$.

\item Given any ${(w,\gamma)}\in\mathscr F_{n-1}^{*\sharp}(\Lambda)$. If $y^*(t)=y_{V;w,\gamma}^*(t;t_0,y))$ where $t^\prime<t,t_0<t^{\prime\prime}$, is the solution of
$(V_{w,\gamma}^*)$ with $y^*(t_0)=y$, then
$$
\psi_V(t-t_0,{\mathcal P}_{w,\gamma}^*(t_0,y))={\mathcal
P}_{w,\gamma}^*(t,y^*(t))\in\Sigma_{t_\cdot w}.
$$
\end{enumerate}
\end{theorem}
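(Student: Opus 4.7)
The theorem splits into two essentially independent clauses, and I would handle them separately.

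For clause (1) the plan is short: read off the regularity of $V_{\textsl{rem}(w,\gamma)}^*$ directly from the definitions. By Lemma~\ref{lem2.9} each partial $\partial\widehat V_{w,\gamma}/\partial y^i$ is jointly continuous in $(t,y,(w,\gamma))$, and identity~\eqref{eq2.13}---whose right side depends continuously on $\chi^{*\sharp}(t,(w,\gamma))$---also forces $\widehat V_{w,\gamma}(t,y)$ itself to be jointly continuous. The uniform lower bound $\widehat V_{w,\gamma}^0\ge 1/2$ valid on $\mathcal N_\textsl{s}$ then promotes the coordinatewise quotient of Definition~\ref{def2.11} to a map that is jointly continuous and $C^1$ in $y$, with jointly continuous $y$-derivative. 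Subtracting $R_{w,\gamma}^*(t)y$, whose continuity in $(t,(w,\gamma))$ is Lemma~\ref{lem2.2}(1), finishes clause (1).

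For clause (2) the plan is to invoke Lemma~\ref{lem2.7} with $\widehat N=\mathbb R\times\mathbb R_{\mathfrak c}^{n-1}$, $N=\mathbb E$, $h=\mathcal P_{w,\gamma}^*$, $\widehat X=\widehat V_{w,\gamma}$ and $X=V$. The hypothesis $(Dh)\widehat X=X$ is exactly Definition~\ref{def2.8}, so $\mathcal P_{w,\gamma}^*$ carries every integral curve of~\eqref{eq2.12a} onto an integral curve of $V$ with matching $V$-flow parameter $\mathbbm t$. I would then reparametrize by $t$: because $\widehat V_{w,\gamma}^0\ge 1/2>0$ on $\mathcal N_\textsl{s}$, the map $\mathbbm t\mapsto t(\mathbbm t)$ is strictly increasing, and dividing the spatial components by the temporal one reproduces exactly $(V_{w,\gamma}^*)$ via Definition~\ref{def2.11}. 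Hence the solution $y^*(t)$ with $y^*(t_0)=y$ is precisely the $y$-trace of the lifted integral curve through $(t_0,y)$, and pushing down by $\mathcal P_{w,\gamma}^*$ yields a $V$-orbit through $\mathcal P_{w,\gamma}^*(t_0,y)$ that meets $\Sigma_{t\cdot w}$ at $\mathcal P_{w,\gamma}^*(t,y^*(t))$.

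The one point I expect to need care---and which I view as the only mild obstacle---is matching this intersection to the first-hitting-time definition of $\psi_V$. Strict monotonicity of $t(\mathbbm t)$ implies the projected $V$-orbit meets each cross-section $\Sigma_{t\cdot w}$ at most once over the domain of the local reparametrization, so in both the forward and backward time directions the intersection we produced is the first hitting; this identifies $\psi_V(t-t_0,\mathcal P_{w,\gamma}^*(t_0,y))$ with $\mathcal P_{w,\gamma}^*(t,y^*(t))$. Membership of this point in $\Sigma_{t\cdot w}$ is automatic from the construction~\eqref{eq2.10} of $\mathcal P_{w,\gamma}^*$, completing the plan.
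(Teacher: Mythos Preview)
Your proposal is correct and follows precisely the route the paper indicates. The paper gives no explicit proof of Theorem~\ref{thm2.12}, stating only that it is obtained ``similar to Lemma~\ref{lem2.5}''; your clause~(1) argument via Lemma~\ref{lem2.9}, the quotient construction of Definition~\ref{def2.11}, and Lemma~\ref{lem2.2}(1), together with your clause~(2) argument via Lemma~\ref{lem2.7} and the monotone reparametrization $\mathbbm t\mapsto t$, is exactly the unpacking of that analogy the paper has in mind (cf.\ the remark immediately following Lemma~\ref{lem2.7}).
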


Moreover, similar to Lemma~\ref{lem2.6} we have the following
important result.

\begin{theorem}\label{thm2.13}
The following three statements hold.
\begin{enumerate}
\item Given any $(V,{(w,\gamma)})\in\mathcal N_\textsl{s}\times\mathscr F_{n-1}^{*\sharp}(\Lambda)$, there is some
$L>0$ such that
$$
\|V_{\textsl{rem}(w,\gamma)}^*(t,y)-V_{\textsl{rem}(w,\gamma)}^*(t,y^\prime)\|\le
L\|y-y^\prime\|
$$
for any $t\in\mathbb R$ and for any $y,y^\prime\in\mathbb
R_\mathfrak c^{n-1}$.

\item To any $\eta>0$ there exists a $C^1$-neighborhood $\mathcal U_\textsl{s}^\prime\subset\mathcal N_\textsl{s}$ of $S$ and $\xi^\prime\in(0,\mathfrak c]$
such that: $\forall\,V\in\mathcal U_\textsl{s}^\prime$
$$
\sup_{(t,y)\in\mathbb R\times\mathbb
R_{\xi^\prime}^{n-1}}\|V_{\textsl{rem}(w,\gamma)}^*(t,y)\|\le
\eta\xi^\prime\quad\forall\,(w,{\gamma})\in\mathscr
F_{n-1}^{*\sharp}(\Lambda).
$$

\item To any given $\kappa>0$ there corresponds a $C^1$-neighborhood $\mathcal U_\textsl{s}^{\prime\prime}\subset\mathcal N_\textsl{s}$ of $S$ and a constant $\xi^{\prime\prime}\in(0,\mathfrak c]$
such that: $\forall\,V\in\mathcal U_\textsl{s}^{\prime\prime}$
$$
\|V_{\textsl{rem}(w,\gamma)}^*(t,y)-V_{\textsl{rem}(w,\gamma)}^*(t,y^\prime)\|\le
\kappa\|y-y^\prime\|\quad\forall\,y,y^\prime\in\mathbb
R_{\xi^{\prime\prime}}^{n-1}
$$
uniformly for $(t,(w,\gamma))\in\mathbb R\times\mathscr
F_{n-1}^{*\sharp}(\Lambda)$.
\end{enumerate}
\end{theorem}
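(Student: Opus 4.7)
The plan is to reduce all three assertions to the corresponding statements for $S$ itself, which are already available in Lemmas~\ref{lem2.2}, \ref{lem2.5} and~\ref{lem2.6}. Two observations drive the reduction. First, the cocycle identity~(\ref{eq2.16b}) shows that $y\mapsto V_{\textsl{rem}(w,\gamma)}^*(t,y)$ agrees with $y\mapsto V_{\textsl{rem}(\chi^{*\sharp}(t,(w,\gamma)))}^*(0,y)$, so every ``uniform in $t$'' claim becomes a ``uniform in $(w,\gamma)\in\mathscr{F}_{n-1}^{*\sharp}(\Lambda)$'' claim at time zero. Second, the linear parts cancel:
\begin{equation*}
V_{\textsl{rem}(w,\gamma)}^*(t,y)-S_{\textsl{rem}(w,\gamma)}^*(t,y)=V_{w,\gamma}^*(t,y)-S_{w,\gamma}^*(t,y),
\end{equation*}
so it will be enough to control the $y$-$C^1$ size of the right-hand side and then invoke Lemma~\ref{lem2.6} for the $S$-part.

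The core technical step will be a \emph{quotient estimate}. From Definition~\ref{def2.11}, component by component,
\begin{equation*}
V_{w,\gamma}^{*,i}-S_{w,\gamma}^{*,i}=\frac{\widehat{V}^i_{w,\gamma}\widehat{S}^0_{w,\gamma}-\widehat{S}^i_{w,\gamma}\widehat{V}^0_{w,\gamma}}{\widehat{V}^0_{w,\gamma}\widehat{S}^0_{w,\gamma}},
\end{equation*}
and both denominators are $\ge 1/4$ on $\mathcal{N}_\textsl{s}$. Combining Lemma~\ref{lem2.10} (which bounds $\|\widehat{V}_{w,\gamma}(0,\cdot)-\widehat{S}_{w,\gamma}(0,\cdot)\|_{C^1}$ by $\flat_\Lambda^{-1}\|V-S\|_1$) with uniform bounds on $\widehat{S}_{w,\gamma}(0,y)$ and $\partial_y\widehat{S}_{w,\gamma}(0,y)$ supplied by Lemma~\ref{lem2.9} together with conditions (U1)--(U3), I expect to produce a constant $K_\Lambda>0$ such that both $\|V_{w,\gamma}^*(t,y)-S_{w,\gamma}^*(t,y)\|$ and $\|\partial_y[V_{w,\gamma}^*-S_{w,\gamma}^*](t,y)\|$ are at most $K_\Lambda\|V-S\|_1$ uniformly on $\mathbb{R}\times\mathbb{R}_{\mathfrak{c}}^{n-1}\times\mathscr{F}_{n-1}^{*\sharp}(\Lambda)$.

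Given this estimate, the three items follow quickly. For (1), combining the estimate with Lemma~\ref{lem2.5}(3) bounds $\partial_y V_{w,\gamma}^*$ uniformly, and Lemma~\ref{lem2.2}(1) bounds $R_{w,\gamma}^*(t)$, so the mean value inequality yields the required $L$. For (2), I would pick $\xi'$ from Lemma~\ref{lem2.6} with constant $\eta/2$ so that $\|S_{\textsl{rem}(w,\gamma)}^*(t,y)\|\le(\eta/2)\|y\|<(\eta/2)\xi'$ (comparing with $y'=\mathbf{0}$, where $S_{\textsl{rem}(w,\gamma)}^*(t,\mathbf{0})=\mathbf{0}$ by Lemma~\ref{lem2.5}(1)), then shrink the $C^1$-neighborhood so that $K_\Lambda\|V-S\|_1<(\eta/2)\xi'$ and add the two bounds. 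For (3), apply Lemma~\ref{lem2.6} with constant $\kappa/2$ for the $S$-part, shrink $\mathcal{U}_\textsl{s}''$ so that $K_\Lambda\|V-S\|_1<\kappa/2$, and use the mean value theorem applied to the quotient estimate for the $V-S$ part. The main obstacle I anticipate is precisely the uniformity of the quotient estimate over the noncompact base $\mathscr{F}_{n-1}^{*\sharp}(\Lambda)$: this requires uniform bounds on $\partial_y\widehat{V}_{w,\gamma}(0,y)$ and $\partial_y\widehat{S}_{w,\gamma}(0,y)$, which reduce, via the defining identity $(D\mathcal{P}_{w,\gamma}^*)\widehat{V}_{w,\gamma}=V\circ\mathcal{P}_{w,\gamma}^*$, to uniform bounds on $\partial_y\mathcal{P}_{w,\gamma}^*$ and on $\left.\frac{d}{dt}\right|_{t=0}\chi_{t,w}^{*\sharp}\gamma$, both of which are provided by (U1)--(U3) as in the proof of Lemma~\ref{lem2.10}.
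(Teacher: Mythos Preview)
Your proposal is correct and follows essentially the same route as the paper: both split $V_{\textsl{rem}(w,\gamma)}^*=(V_{w,\gamma}^*-S_{w,\gamma}^*)+S_{\textsl{rem}(w,\gamma)}^*$, control the second summand by Lemmas~\ref{lem2.5} and~\ref{lem2.6}, and control the first (and its $y$-derivative) uniformly in terms of $\|V-S\|_1$ via Lemma~\ref{lem2.10}, then finish each item with the mean value theorem. The only difference is cosmetic: you make the passage from the hat-level estimate of Lemma~\ref{lem2.10} to the starred (reduced) systems explicit through the quotient formula and a constant $K_\Lambda$, whereas the paper simply invokes Lemma~\ref{lem2.10} and the bound $\widehat{V}_{w,\gamma}^0\ge\tfrac12$ without writing this step out.
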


\begin{proof}
By (\ref{eq2.16a}), Lemma~\ref{lem2.10} and Lemma~\ref{lem2.2} and
condition (U1)
$$
L:=\sup_{(t,y)\in\mathbb R\times\mathbb R_\mathfrak
c^{n-1}}\left\{\|\partial V_{w,\gamma}^*(t,y)/\partial
y\|+\|R_{w,\gamma}^*(t)\|\right\}<+\infty
$$
which satisfies the requirement of the statement (1).

Given any $\eta>0$. For any $V\in\mathcal N_\textsl{s}$ and for any
$(w,\gamma)\in\mathscr F_{n-1}^{*\sharp}(\Lambda)$ one can write
$$
V_{\textsl{rem}(w,\gamma)}^*(t,y)=\left(V_{w,\gamma}^*(t,y)-{S}_{w,\gamma}^*(t,y)\right)+\left({S}_{w,\gamma}^*(t,y)-{R}_{w,\gamma}^*(t)y\right)
$$
for any $(t,y)\in\mathbb R\times\mathbb R_\mathfrak c^{n-1}$. Then,
from Lemma~\ref{lem2.10} and Lemma~\ref{lem2.6} there exists a
$C^1$-neighborhood $\mathcal U_\textsl{s}^\prime\subset\mathcal
N_\textsl{s}$ of $S$ and a constant $\xi^\prime\in(0,\mathfrak c]$
such that
$$
\sup_{(t,y)\in\mathbb R\times\mathbb
R_{\xi^\prime}^{n-1}}\|V_{w,\gamma}^*(t,y)\|\le
\eta\xi^\prime\quad\forall\,(w,{\gamma})\in\mathscr
F_{n-1}^{*\sharp}(\Lambda)\textrm{ and }V\in\mathcal
U_\textsl{s}^\prime.
$$
This shows the statement (2).

Now given any $\kappa>0$. Next, for any $V\in\mathcal N_\textsl{s}$
consider
$$
\frac{\partial}{\partial
y}V_{\textsl{rem}(w,\gamma)}^*(t,y)=\frac{\partial}{\partial
y}\left(V_{w,\gamma}^*(t,y)-{S}_{w,\gamma}^*(t,y)\right)+\left(\frac{\partial}{\partial
y}{S}_{w,\gamma}^*(t,y)-{R}_{w,\gamma}^*(t)\right).
$$
From Lemma~\ref{lem2.10} we obtain that
$$
\|\frac{\partial}{\partial
y}\left({V}_{w,\gamma}^*(t,y)-{S}_{w,\gamma}^*(t,y)\right)\|\to
0\textrm{ as } \|V-S\|_1\to0
$$
uniformly for $(t,y,(w,{\gamma}))\in\mathbb R\times\mathbb
R_\mathfrak c^{n-1}\times\mathscr F_{n-1}^{*\sharp}(\Lambda)$ and,
from Lemma~\ref{lem2.2} there exists
$\xi^{\prime\prime}\in(0,\mathfrak c]$ so that
$$
\|\frac{\partial}{\partial
y}{S}_{w,\gamma}^*(t,y)-{R}_{w,\gamma}^*(t)\|\le\frac{\kappa}{2}\quad\forall\,(t,(w,\gamma))\in\mathbb
R\times\mathscr F_{n-1}^{*\sharp}(\Lambda)\textrm{ and }y\in\mathbb
R_{\xi^{\prime\prime}}^{n-1}.
$$
Hence, there is a $C^1$-neighborhood $\mathcal
U_\textsl{s}^{\prime\prime}\subset\mathcal N_\textsl{s}$ of $S$ such
that: $\forall\,V\in\mathcal U_\textsl{s}^{\prime\prime}$
$$
\|\frac{\partial}{\partial
y}V_{\textsl{rem}(w,\gamma)}^*(t,y)\|\le\kappa\quad\forall\,(t,(w,\gamma))\in\mathbb
R\times\mathscr F_{n-1}^{*\sharp}(\Lambda)\textrm{ and }y\in\mathbb
R_{\xi^{\prime\prime}}^{n-1}.
$$
This implies the statement (3) by the mean value theorem.

Thus, Theorem~\ref{thm2.13} is proved.
\end{proof}

\section{Exponential dichotomy}\label{sec3}

In this section, we will introduce the exponential dichotomy due to
Liao~\cite{L74}, by which we consider in part the relationship
between the phase portraits of linear differential equations and
their small perturbations on Euclidean spaces. Here we shall deal
with families of ordinary differential equations, nor only a single
equations.

Given a positive integer $p$. For convenience of our later
discussion, let $M_{p\times p}^\vartriangle$ be the set of
continuous matrix-valued functions
$A\colon\mathbb{R}\rightarrow\textrm{gl}(p,\mathbb{R})$ such that
\begin{enumerate}
\item[(a)] $A(t)$ is triangular with $A_{ij}(t)=0$ for $1\le j<i\le
p$;

\item[(b)] $A$ is uniformly bounded on $\mathbb{R}$ with
$\eta_A:=\sup_{t\in\mathbb{R}}\|A(t)\|<\infty$;

\item[(c)] $A$ is hyperbolic with index $p_-$ in the following
sense:
$$
\xi_A:=\sup_{t\in\mathbb{R}}
\left\{\sum_{k=1}^{p_-}\int_{-\infty}^te^{\int_s^tA_{kk}(\tau)\,d\tau}\,ds+
\sum_{k=1+p_-}^{p}\int_t^\infty
e^{\int_s^tA_{kk}(\tau)\,d\tau}\,ds\right\}<\infty.
$$
\end{enumerate}
In addition, let $M_{p\times 1}$ be the set of continuous functions
$f\colon\mathbb{R}\times\mathbb{R}^p\rightarrow\mathbb{R}^p$ such
that
\begin{enumerate}
\item[(d)] $f(t,z)$ is bounded on $\mathbb{R}\times\mathbb{R}^p$ with
$\eta_f:=\sup_{(t,z)\in\mathbb{R}\times\mathbb{R}^p}\|f(t,z)\|<\infty$;

\item[(e)] $f(t,z)$ is Lipschitz in $z$ with a Lipschitz constant
$L_f$:
$$
\|f(t,z)-f(t,z^\prime)\|\le L_f\|z-z^\prime\|
$$
for all $t\in\mathbb{R}$ and for any $z,z^\prime\in\mathbb{R}^p$.
\end{enumerate}
For any $(A,f)\in M_{p\times p}^\vartriangle\times M_{p\times 1}$,
we will study the equations
\begin{equation}\label{eq3.1}
\dot{z}=A(t)z+f(t,z),\quad (t,z)\in\mathbb{R}\times\mathbb{R}^p
\end{equation}
and
\begin{equation}\label{eq3.2}
\dot{z}=A(t)z,\quad (t,z)\in\mathbb{R}\times\mathbb{R}^p.
\end{equation}
For any $(s,u)\in\mathbb{R}\times\mathbb{R}^p$, let $z_{A,f}(t;s,u)$
and $z_A(t;s,u)$ denote the solutions of (\ref{eq3.1}) and
(\ref{eq3.2}) with $z_{A,f}(s;s,u)=u=z_A(s;s,u)$, respectively.

The following result is important for the proof of our main theorem.

\begin{theorem}[{\cite[Theorems~3.1 and 3.2]{L74}}]\label{thm3.1}
Let $(A,f)\in M_{p\times p}^\vartriangle\times M_{p\times 1}$ be any
given. Then, there is a unique surjective mapping
$$
\Delta_{A,f}\colon\mathbb{R}\times\mathbb{R}^p\rightarrow\mathbb{R}\times\mathbb{R}^p;\
(s,u)\mapsto(s,\Delta_s(u))
$$
which possesses the following properties:
\begin{enumerate}
\item[(i)] $\Delta_{A,f}$ maps the phase-portraits of (\ref{eq3.1}) onto
that of (\ref{eq3.2}). In fact,
$$
\Delta_{A,f}(t,z_{A,f}(t;s,u))=(t,z_A(t;s,\Delta_s(u)));
$$
that is to say, the following commutativity holds:
$$
\begin{CD}
\mathbb{R}^p@>{z_{A,f}(t;s,\cdot)}>>\mathbb{R}^p\\
@V{\Delta_s}VV                 @VV{\Delta_t}V\\
\mathbb{R}^p@>{z_A(t;s,\cdot)}>>\mathbb{R}^p.
\end{CD}
$$
\item[(ii)] $\Delta_{A,f}$ is a $\varepsilon_{A,f}$-mapping, i.e.,
$\|(s,u)-\Delta_{A,f}(s,u)\|\le\varepsilon_{A,f}$ for all
$(s,u)\in\mathbb{R}\times\mathbb{R}^p$, where
$$
\varepsilon_{A,f}=\eta_f\xi_A(1+2\eta_A\xi_A)^p;
$$

\item[(iii)] For any
$(s,u),(s,u^\prime)\in\mathbb{R}\times\mathbb{R}^p$,
$z_{A,f}(t;s,u)-z_A(t;s,u^\prime)$ is bounded on $\mathbb{R}$ if and
only if $\Delta_s(u)=u^\prime$.

\item[(iv)] If
$$
L_f\le\frac{1}{\xi_A(1+\eta_A\xi_A)^p},
$$
then $\Delta_{A,f}$ is a self-homeomorphism of
$\mathbb{R}\times\mathbb{R}^p$.
\end{enumerate}
\end{theorem}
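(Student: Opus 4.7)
The plan is to build the map $\Delta_{A,f}$ via the classical bounded-solution method, exploiting the upper-triangular hyperbolic structure of $A$. Fix $(s,u)$. If a point $u'\in\mathbb R^p$ is to serve as $\Delta_s(u)$, then by the desired conjugacy in (i), the difference
\begin{equation*}
v(t):=z_{A,f}(t;s,u)-z_A(t;s,u')
\end{equation*}
satisfies the linear inhomogeneous equation
\begin{equation*}
\dot v(t)=A(t)v(t)+f(t,z_{A,f}(t;s,u))=:A(t)v(t)+g_u(t),
\end{equation*}
with forcing term $g_u$ bounded by $\eta_f$, and by (iii) it must be bounded on $\mathbb R$. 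Hence the whole problem reduces to producing, for each bounded forcing $g_u$, a unique bounded solution $v_{s,u}$ of $\dot v=A(t)v+g_u(t)$ on $\mathbb R$, and then defining $\Delta_s(u):=u-v_{s,u}(s)$.

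I would construct and estimate $v_{s,u}$ by backward induction on the coordinate index. Because $A$ is upper-triangular, the system splits into the scalar equations
\begin{equation*}
\dot v_k=A_{kk}(t)v_k+\sum_{j>k}A_{kj}(t)v_j+g_k(t),\qquad k=p,p-1,\ldots,1,
\end{equation*}
where the terms $v_{k+1},\ldots,v_p$ have already been determined at earlier steps. For $k>p_-$ the unique bounded scalar solution is
\begin{equation*}
v_k(t)=-\int_t^\infty e^{\int_\tau^t A_{kk}(\sigma)d\sigma}\Bigl(g_k(\tau)+\sum_{j>k}A_{kj}(\tau)v_j(\tau)\Bigr)d\tau,
\end{equation*}
and symmetrically for $k\le p_-$ with $\int_{-\infty}^t$. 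Hypothesis (c) bounds each scalar kernel integral by $\xi_A$, giving the recursion
\begin{equation*}
\|v_k\|_\infty\le\xi_A\Bigl(\eta_f+\eta_A\sum_{j>k}\|v_j\|_\infty\Bigr),
\end{equation*}
which, solved downward from $k=p$ to $k=1$, yields a bound of the form $\|v_{s,u}\|_\infty\le\eta_f\xi_A(1+2\eta_A\xi_A)^p$; this is the $\varepsilon_{A,f}$-estimate of (ii). Uniqueness of each scalar bounded solution follows from the hyperbolicity of $A_{kk}$, which in turn proves (iii): any bounded difference between an orbit of (\ref{eq3.1}) and an orbit of (\ref{eq3.2}) must coincide with the $v_{s,u}$ just constructed.

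Property (i) is then immediate by direct differentiation: $z_{A,f}(t;s,u)-v_{s,u}(t)$ solves the homogeneous linear equation $\dot z=A(t)z$ and agrees with $\Delta_s(u)$ at $t=s$, so it must equal $z_A(t;s,\Delta_s(u))$. For the homeomorphism (iv) I would build the inverse directly: given a target $u'$, set $z(t):=z_A(t;s,u')$ and seek a bounded $w$ solving $\dot w=A(t)w+f(t,z(t)+w(t))$; then $\Delta_s^{-1}(u'):=u'+w(s)$. Such a $w$ is obtained as the unique fixed point of the map $w\mapsto\mathcal K[f(\cdot,z(\cdot)+w(\cdot))]$, where $\mathcal K$ is the bounded-solution operator above whose operator norm is controlled by $\xi_A(1+\eta_A\xi_A)^p$. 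Under the smallness hypothesis on $L_f$ in (iv) this becomes a strict contraction on bounded continuous functions, producing both the inverse and its continuity; continuity of $\Delta_s$ itself follows from continuous dependence of the bounded solution on its forcing.

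The main technical obstacle is the coupling introduced by the off-diagonal entries $A_{kj}$ for $j>k$: without the upper-triangular hypothesis one could not treat the $k$-th equation as a scalar inhomogeneous ODE with ``known'' forcing, and the inductive bound would break. The entire argument hinges on the observation that triangularity converts a $p$-dimensional dichotomy problem into $p$ coupled one-dimensional problems, each solvable via the scalar integrability built into (c), with the polynomial-in-$p$ factor $(1+2\eta_A\xi_A)^p$ absorbing all inter-coordinate interactions.
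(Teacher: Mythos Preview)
The paper does not supply a proof of this theorem: it is quoted verbatim from Liao \cite[Theorems~3.1 and 3.2]{L74} and used as a black box in \S\ref{sec3}--\S\ref{sec4}. So there is no in-paper argument to compare against; what can be said is whether your sketch is a plausible reconstruction of Liao's proof, and it is. The reduction of $\dot v=A(t)v+g(t)$ to $p$ scalar equations via upper-triangularity, the explicit Green-type formula for the unique bounded scalar solution using hypothesis~(c), the backward recursion producing the factor $(1+2\eta_A\xi_A)^p$, and the contraction argument for (iv) are exactly the classical ingredients, and this is indeed how Liao proceeds.

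One point you pass over: the theorem asserts that $\Delta_{A,f}$ is \emph{surjective} already in the general case, before the Lipschitz smallness in (iv) is imposed. Your construction of $\Delta_s^{-1}$ via contraction only works under (iv), so as written you have not established surjectivity in general. This gap is easily closed once (ii) is in hand: since $\|\Delta_s(u)-u\|\le\varepsilon_{A,f}$ for all $u$, the map $\Delta_s$ is properly homotopic to the identity on $\mathbb R^p$ (via the straight-line homotopy), hence has degree one about every point and is onto. Alternatively one can run a Schauder-type argument on the fixed-point equation $w=\mathcal K[f(\cdot,z_A(\cdot;s,u')+w(\cdot))]$, using only boundedness of $f$, but the degree argument is cleaner. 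With that addition your outline is complete.
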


Next, we endow $M_{p\times p}^\vartriangle\times M_{p\times 1}$ with
the compact-open topology. Let $(\mathbb{P},d)$ be a metric space
with metric $d$ and $\eta_\mathbb{P}>0,\xi_\mathbb{P}>0,
L_\mathbb{P}>0$ constants with
$L_\mathbb{P}\le\frac{1}{\xi_\mathbb{P}(1+\eta_\mathbb{P}\xi_\mathbb{P})^p}$.
Let
$$
\mathfrak{S}\colon\mathbb{P}\rightarrow M_{p\times
p}^\vartriangle\times M_{p\times 1};\
\lambda\mapsto(A_\lambda,f_\lambda)
$$
be a continuous mapping such that
$\eta_{A_\lambda}\le\eta_\mathbb{P},\xi_{A_\lambda}\le\xi_\mathbb{P},L_{f_\lambda}\le
L_\mathbb{P}$, and
\begin{equation*}
\dot{z}=A_\lambda(t)z,\quad
(t,z)\in\mathbb{R}\times\mathbb{R}^p\leqno{(\ref{eq3.2})_\lambda}
\end{equation*}
has no any nontrivial bounded solutions. We consider the bounded
solutions of the equations with parameter $\lambda$
\begin{equation*}
\dot{z}=A_\lambda(t)z+f_\lambda(t,z),\quad
(t,z)\in\mathbb{R}\times\mathbb{R}^p.\leqno{(\ref{eq3.1})_\lambda}
\end{equation*}
Define
$$\Delta^*\colon\mathbb{P}\rightarrow\mathbb{R}^p$$
in the way: for any $\lambda\in\mathbb{P}$
$$
\Delta_\lambda(0,\Delta^*(\lambda))=(0,\textbf{0})\in\mathbb{R}\times\mathbb{R}^p
$$
where
$\Delta_\lambda=\Delta_{A_\lambda,f_\lambda}\colon\mathbb{R}\times\mathbb{R}^p\rightarrow\mathbb{R}\times\mathbb{R}^p$
is determined by Theorem~\ref{thm3.1} for $(\ref{eq3.1})_\lambda$
and $(\ref{eq3.2})_\lambda$.

We will need the following result, which will play a useful role in
the later proof of our main theorem in $\S\ref{sec4}$.

\begin{theorem}\label{thm3.2}
The mapping $\Delta^*\colon\mathbb{P}\rightarrow\mathbb{R}^p$ is
continuous.
\end{theorem}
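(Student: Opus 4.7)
The strategy is to characterize $\Delta^*(\lambda)$ as the initial value, at $s=0$, of the unique bounded solution on $\mathbb R$ of the perturbed system $(\ref{eq3.1})_\lambda$, and then to obtain continuity of this bounded solution in $\lambda$ via a uniform contraction mapping argument. By Theorem~\ref{thm3.1}(iii) applied with $u^\prime=\mathbf 0$, combined with the standing hypothesis that $(\ref{eq3.2})_\lambda$ admits only the trivial bounded solution, $\Delta^*(\lambda)$ is the unique $u\in\mathbb R^p$ for which $t\mapsto z_{A_\lambda,f_\lambda}(t;0,u)$ is bounded on $\mathbb R$; Theorem~\ref{thm3.1}(ii) moreover yields the a priori bound $\|z_{A_\lambda,f_\lambda}(\cdot\,;0,\Delta^*(\lambda))\|_\infty\le\varepsilon_{A_\lambda,f_\lambda}$.

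Next, I would recast this bounded solution as the fixed point of a Lyapunov--Perron operator. The upper-triangular form of $A_\lambda$ together with $\xi_{A_\lambda}\le\xi_{\mathbb P}$ yields a bounded linear Green operator $\mathcal G_\lambda\colon C_b(\mathbb R;\mathbb R^p)\to C_b(\mathbb R;\mathbb R^p)$ with $\|\mathcal G_\lambda\|\le\xi_{\mathbb P}(1+\eta_{\mathbb P}\xi_{\mathbb P})^p$, whose fixed points under the nonlinear map $T_\lambda(z)(t):=\mathcal G_\lambda[f_\lambda(\cdot,z(\cdot))](t)$ are exactly the bounded solutions of $(\ref{eq3.1})_\lambda$. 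The hypothesis $L_{\mathbb P}\le 1/[\xi_{\mathbb P}(1+\eta_{\mathbb P}\xi_{\mathbb P})^p]$ then supplies a uniform contraction constant $\theta<1$, so that $z_\lambda:=z_{A_\lambda,f_\lambda}(\cdot\,;0,\Delta^*(\lambda))$ is the unique fixed point of $T_\lambda$ in $C_b$.

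Fix $\lambda_0\in\mathbb P$ and a sequence $\lambda_n\to\lambda_0$, and set $z^*:=z_{\lambda_0}$. The standard perturbation estimate for contraction fixed points gives
$$
\|z_{\lambda_n}-z^*\|_\infty\le(1-\theta)^{-1}\|T_{\lambda_n}(z^*)-T_{\lambda_0}(z^*)\|_\infty,
$$
so it is enough to show the right-hand side tends to $0$; evaluating at $t=0$ then yields $\Delta^*(\lambda_n)=z_{\lambda_n}(0)\to z^*(0)=\Delta^*(\lambda_0)$. To establish this convergence I would split the Green integral defining $T_{\lambda_n}(z^*)-T_{\lambda_0}(z^*)$ into a compact piece over $|s|\le N$, where the compact-open continuity of $\mathfrak S$ at $\lambda_0$ gives uniform convergence of $A_{\lambda_n}\to A_{\lambda_0}$ and of $f_{\lambda_n}(\cdot,z^*(\cdot))\to f_{\lambda_0}(\cdot,z^*(\cdot))$, together with a tail piece over $|s|>N$, which is the main obstacle. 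Since no uniform bound on $\eta_{f_\lambda}$ is assumed, the tail has to be absorbed by combining the exponential weights encoded in $\xi_{A_\lambda}\le\xi_{\mathbb P}$ with the Lipschitz estimate $\|f_{\lambda_n}(s,z^*(s))\|\le\|f_{\lambda_n}(s,0)\|+L_{\mathbb P}\|z^*\|_\infty$ and local uniform control of $s\mapsto f_\lambda(s,0)$ extracted from compact-open continuity of $\mathfrak S$ at $\lambda_0$; this should render the tail uniformly small in $n$ as $N\to\infty$, completing the proof.
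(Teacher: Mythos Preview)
Your Lyapunov--Perron fixed-point approach is genuinely different from the paper's, and the tail step contains a real gap. The paper argues by contradiction via a soft limiting procedure: given $\lambda_j\to\lambda_0$ with $\|\Delta^*(\lambda_j)-\Delta^*(\lambda_0)\|\ge\varepsilon$, it uses the a~priori bound from Theorem~\ref{thm3.1} to pass to a subsequential limit $\Delta^*(\lambda_j)\to\mathbf x$, and then invokes only \emph{pointwise} (in $t$) convergence $z_{A_{\lambda_j},f_{\lambda_j}}(t;0,\Delta^*(\lambda_j))\to z_{A_{\lambda_0},f_{\lambda_0}}(t;0,\mathbf x)$, which follows from standard continuous dependence on compact time intervals. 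The limit inherits the uniform bound, hence is the unique bounded solution of $(\ref{eq3.1})_{\lambda_0}$, forcing $\mathbf x=\Delta^*(\lambda_0)$.

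Your scheme instead requires $\|T_{\lambda_n}(z^*)-T_{\lambda_0}(z^*)\|_\infty\to 0$, i.e.\ convergence uniform over all $t\in\mathbb R$, and this is not obtainable from compact-open continuity of $\mathfrak S$: for $|t|$ large the Green integral is dominated by values of $A_{\lambda_n}$ and $f_{\lambda_n}$ on a neighborhood of $t$, precisely where compact-open convergence says nothing. This is not merely a matter of care. Take $p=1$, $A_\lambda\equiv -1$, and $f_\lambda(t,z)=g_\lambda(t)$ independent of $z$, with $g_0\equiv 0$ and $g_{1/n}(t)=\varphi(t+n)$ for a fixed nonnegative bump $\varphi$ supported in $[-1,1]$; then $\lambda_n=1/n\to 0$ in the stated topology, $z^*\equiv 0$, yet $T_{\lambda_n}(z^*)(-n+1)=\int_{-1}^{1}e^{u-1}\varphi(u)\,du$ is a positive constant independent of $n$, so $\|T_{\lambda_n}(z^*)-T_{\lambda_0}(z^*)\|_\infty\not\to 0$ even though $\Delta^*(\lambda_n)=e^{-n}\int_{-1}^{1}e^{u}\varphi(u)\,du\to 0=\Delta^*(0)$. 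Thus the estimate you need can fail while the conclusion of the theorem still holds, and the argument cannot be completed as proposed. (A secondary issue: the standing hypothesis is $L_{\mathbb P}\le 1/[\xi_{\mathbb P}(1+\eta_{\mathbb P}\xi_{\mathbb P})^p]$ with non-strict inequality, so a uniform contraction constant $\theta<1$ has not actually been secured.)
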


\begin{proof}
Let $\lambda_0\in\mathbb{P}$ and $\varepsilon>0$. Letting
$\textbf{x}_0=\Delta^*(\lambda_0)\in\mathbb{R}^p$, we assert that
there exists some $\delta>0$ such that
$\|\Delta^*(\lambda)-\textbf{x}_0\|<\varepsilon$ whenever
$\lambda\in\mathbb{P}$ with $d(\lambda,\lambda_0)<\delta$. If the
assertion were not true, there would be a sequence
$\lambda_j\to\lambda_0$ in $\mathbb{P}$ satisfying
$\|\Delta^*(\lambda_j)-\textbf{x}_0\|\ge\varepsilon$ for all $j$.
Since for all $t\in\mathbb{R}$ we have
$$
\|z_{A_{\lambda_j},f_{\lambda_j}}(t;0,\Delta^*(\lambda_j))\|\le\eta_\mathbb{P}\xi_\mathbb{P}(1+2\eta_\mathbb{P}\xi_\mathbb{P})^p\quad
j=1,2,\ldots
$$
by Theorem~\ref{thm3.1}, we can assume
$\Delta^*(\lambda_j)\to\textbf{x}$ for some
$\textbf{x}\in\mathbb{R}^p$ and
$\|\textbf{x}-\textbf{x}_0\|\ge\varepsilon$. As $\mathfrak{S}$ is
continuous, it follows from a basic theorem of ODE that
$$
\lim_{j\to\infty}z_{A_{\lambda_j},f_{\lambda_j}}(t;0,\Delta^*(\lambda_j))=z_{A_{\lambda_0},f_{\lambda_0}}(t;0,\textbf{x})\quad\forall\,t\in\mathbb{R}
$$
which implies that $z_{A_{\lambda_0},f_{\lambda_0}}(t;0,\textbf{x})$
is a bounded solution of $(\ref{eq3.1})_{\lambda_0}$. So,
$\textbf{x}=\textbf{x}_0$, it is a contradiction.
\end{proof}
\section{Structural stability of hyperbolic sets}\label{sec4}

In this section, we will prove our main theorem stated in the
Introduction and construct an explicit example.

We assume that $S\colon\mathbb{E}\rightarrow\mathbb{R}^n$ is a
$C^1$-vector field on the $n$-dimensional Euclidean $w$-space
$\mathbb{E}, n\ge 2$, which gives rise to a flow
$\phi\colon(t,w)\mapsto t_\cdot w$. Let $\Lambda$ be a
$\phi$-invariant closed subset, not necessarily compact, of
$\mathbb{E}$ such that
\begin{enumerate}
\item[(U1)] $S^\prime(w)$ is uniformly bounded on $\Lambda$;

\item[(U2)] $0<\inf_{w\in\Lambda} \|S(w)\|\le\sup_{w\in\Lambda}
\|S(w)\|<\infty$;

\item[(U3)] $S^\prime(w)$ is uniformly continuous at $\Lambda$; that is to say, to any
$\epsilon>0$ there is some $\delta>0$ so that for any $\mathfrak
w\in\Lambda$, $\|S^\prime(w)-S^\prime(\mathfrak w)\|<\epsilon$
whenever $\|w-\mathfrak w\|<\delta$.
\end{enumerate}

Now we prove the following structural stability theorem by using
Liao methods.

\begin{theorem}\label{thm4.1}
Let $\Lambda$ be a hyperbolic set for $S$; that is to say, there
exist constants $C\ge 1, \lambda<0$ and a continuous
$\Psi$-invariant splitting
\begin{equation*}
\mathbb{T}_{w}=T_{w}^s\oplus T_{w}^u,\ \dim T_w^s=p_-(w)\quad w\in
\Lambda
\end{equation*}
such that
\begin{align*}
\|\Psi_{t_0+t,w}x\|&\le C^{-1}\exp(\lambda
t)\|\Psi_{t_0,w}x\|\quad\forall\,x\in
T_{w}^s\\
\intertext{and} \|\Psi_{t_0+t,w}x\|&\ge C\exp(-\lambda
t)\|\Psi_{t_0,w}x\|\quad\forall\,x\in T_{w}^u
\end{align*}
for any $t_0\in\mathbb{R}$ and for all $t>0$. Then for any
$\varepsilon>0$ there is a $C^1$-neighborhood $\mathcal {U}$ of $S$
in $\mathfrak{X}^1(\mathbb{E})$ such that, if $V\in\mathcal {U}$
then there exists a $\varepsilon$-topological mapping $H$ from
$\Lambda$ onto some closed subset $\Lambda_V$ which sends orbits of
$S$ in $\Lambda$ into orbits of $V$ in $\Lambda_V$, such that
$H(w)\in \Sigma_w$ and $\psi_V(t,H(w))=H(t_\cdot
w)\in\Sigma_{t_\cdot w}$ for all $w\in\Lambda$ and for any
$t\in\mathbb{R}$.
\end{theorem}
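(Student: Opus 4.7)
The plan is to extract the conjugacy $H$ fiberwise by applying Liao's exponential dichotomy theorem (Theorem~\ref{thm3.1}) to the reduced standard systems $(V_{w,\gamma}^*)$ developed in $\S\ref{sec2}$, using as the parameter space the sub-bundle of $\mathscr{F}_{n-1}^{*\sharp}(\Lambda)$ consisting of orthonormal frames adapted to the hyperbolic splitting. I call a frame $\gamma\in\mathscr{F}_{n-1,w}^{*\sharp}$ \emph{adapted} at $w\in\Lambda$ if $\textrm{col}_i\gamma\in T_w^s$ for $1\le i\le\dim T_w^s$ and $\textrm{col}_i\gamma\in T_w^u$ for the remaining indices. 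For any such adapted $(w,\gamma)$, Lemma~\ref{lem2.2} together with Corollary~\ref{cor2.3} yields that $A_{(w,\gamma)}(t):=R_{w,\gamma}^*(t)$ belongs to $M_{(n-1)\times(n-1)}^\vartriangle$ with index $p_-=\dim T_w^s$, and with constants $\eta_A,\xi_A$ uniform in $(w,\gamma)$; in particular $\dot{y}=A_{(w,\gamma)}(t)y$ admits no nontrivial bounded solution.

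Next I need to turn $V_{\textsl{rem}(w,\gamma)}^*$ into a function in the class $M_{(n-1)\times 1}$, which requires extending it from $\mathbb{R}\times\mathbb{R}_{\mathfrak c}^{n-1}$ to $\mathbb{R}\times\mathbb{R}^{n-1}$. For $V$ in a small $C^1$-neighborhood of $S$ and $\xi\in(0,\xi^{\prime\prime}]$ sufficiently small, Theorem~\ref{thm2.13}(2),(3) supplies uniform sup-norm and Lipschitz bounds of $V_{\textsl{rem}(w,\gamma)}^*$ on $\mathbb{R}\times\mathbb{R}_\xi^{n-1}$. I multiply by a standard radial bump in $\|y\|/\xi$ to obtain $f_{(w,\gamma)}\in M_{(n-1)\times 1}$ whose sup-norm $\eta_f$ and Lipschitz constant $L_f$ remain small, uniformly in $(w,\gamma)$. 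Shrinking the neighborhood once more ensures $L_f<[\xi_A(1+\eta_A\xi_A)^{n-1}]^{-1}$, so Theorem~\ref{thm3.1}(iv) applies uniformly, furnishing a self-homeomorphism $\Delta_{(w,\gamma)}$ of $\mathbb{R}\times\mathbb{R}^{n-1}$; the preimage of $(0,\mathbf{0})$ gives the unique bounded solution $y^*_{(w,\gamma)}(t)$ of the extended equation, of uniform sup-norm $\le\eta_f\xi_A(1+2\eta_A\xi_A)^{n-1}$ by Theorem~\ref{thm3.1}(ii). Making this bound less than $\xi$, the bump is inactive along $y^*_{(w,\gamma)}$, which therefore solves the original $(V_{w,\gamma}^*)$.

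I then put $H(w):=w+\gamma\, y^*_{(w,\gamma)}(0)\in\Sigma_w$ for any choice of adapted $\gamma$. The underlying curve $t\mapsto\mathcal{P}_{w,\gamma}^*(t,y^*_{(w,\gamma)}(t))$ is the unique $\psi_V$-orbit staying in a fixed tube of the $\phi$-orbit of $w$, by Theorem~\ref{thm3.1}(iii), so $H(w)$ is independent of the adapted frame chosen. The equivariance $\psi_V(t,H(w))=H(t_\cdot w)\in\Sigma_{t_\cdot w}$ follows from Theorem~\ref{thm2.12}(2), identity~(\ref{eq2.15}), and $\Psi$-invariance of the splitting: $\chi^{*\sharp}(t,(w,\gamma))$ is adapted at $t_\cdot w$, and $y^*_{(w,\gamma)}(t+\cdot)$ is the unique bounded solution at the translated base point. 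The $\varepsilon$-bound $\|H(w)-w\|=\|y^*(0)\|$ is controlled by $\varepsilon_{A,f}$ and can be made arbitrarily small. Continuity of $H$ follows from Theorem~\ref{thm3.2}: locally around any $w_0\in\Lambda$ I pick a continuous adapted section $w\mapsto\gamma_w$ (possible because $T^s,T^u$ are continuous), whence $w\mapsto y^*_{(w,\gamma_w)}(0)$, and therefore $H$, is continuous. Injectivity and properness of $H$, which identify $\Lambda_V:=H(\Lambda)$ as closed and $H^{-1}$ as continuous, follow from the $\varepsilon$-closeness of $H$ to the identity together with condition (U2), which keeps distinct $\phi$-orbits uniformly apart on distinct cross-sections $\Sigma_{t_\cdot w}$.

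The main obstacle is the \emph{uniformity} of every estimate over the possibly noncompact $\Lambda$: all Liao constants $\eta_A,\xi_A,\eta_f,L_f$ must be bounded independently of $(w,\gamma)$ so that a single $C^1$-neighborhood of $S$ works for every $w\in\Lambda$ simultaneously. This is exactly what conditions (U1)--(U3) purchase through Lemmas~\ref{lem2.2},~\ref{lem2.5},~\ref{lem2.6} and Theorem~\ref{thm2.13}, reducing the noncompact problem to a uniformly parametric version of the classical one. A secondary technicality is that the radial cut-off used to realize $f_{(w,\gamma)}\in M_{(n-1)\times 1}$ must preserve the smallness of $L_f$ relative to $[\xi_A(1+\eta_A\xi_A)^{n-1}]^{-1}$; this is handled by fixing $\xi$ first (independently of $(w,\gamma)$) and only then shrinking the neighborhood of $S$.
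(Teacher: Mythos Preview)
Your plan is essentially the paper's proof: pass to adapted frames in $\mathscr{F}_{n-1}^{*\sharp}(\Lambda)$, apply Theorem~\ref{thm3.1} after a bump-function extension of $V_{\textsl{rem}(w,\gamma)}^*$, and read off $H$ from the unique bounded solution, with continuity supplied by Theorem~\ref{thm3.2}. Two points need repair, however.

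First, your definition of \emph{adapted} is too strong: an \emph{orthonormal} frame whose last $n-1-p_-(w)$ columns lie in $T_w^u$ exists only if $T_w^s\perp T_w^u$, which is not assumed. The paper's set $\mathcal{A}$ demands only $\textrm{col}_k\gamma\in T_w^s$ for $k\le p_-(w)$; Corollary~\ref{cor2.3} then yields the required dichotomy estimates for \emph{all} diagonal entries $\omega_k^*$ through the upper-triangular structure of $R_{w,\gamma}^*$, and $\chi^{*\sharp}$-invariance of $\mathcal{A}$ follows from $\Psi$-invariance of $T^s$ alone. Second, injectivity does not follow from (U2): that condition bounds $\|S\|$ but does not separate distinct $\phi$-orbits. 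The mechanism the paper uses is expansivity: if $H(w)=H(w')$ then by equivariance $t_\cdot w'$ stays in the $\varepsilon$-tube about the orbit of $w$ for all $t$; since the reduced standard system~(\ref{eq4.3}) of $S$ itself has $y\equiv\mathbf{0}$ as its only bounded global solution (a consequence of hyperbolicity, arranged at the outset), $w'$ must lie on the $\phi$-orbit of $w$, and then the cross-section separation $\mathfrak{N}_{\mathfrak c}(w)\cap\mathfrak{N}_{\mathfrak c}(t_\cdot w)=\varnothing$ for $0<|t|<2\varepsilon\diamondsuit_\Lambda^{-1}$ forces $w=w'$. Likewise, closedness of $\Lambda_V$ and continuity of $H^{-1}$ are obtained in the paper by a direct sequential argument exploiting compactness of the orthonormal-frame fibers, not from an abstract properness claim.
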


\begin{proof}
Let
$$
\mathcal
{A}=\left\{(w,\gamma)\in\mathscr{F}_{n-1}^{*\sharp}(\Lambda)\,|\,\textrm{col}_k\gamma\in
T_w^s\textrm{ for }1\le k\le p_-(w)\right\}.
$$
Clearly, $\mathcal {A}$ is a $\chi^{*\sharp}$-invariant closed
subset of $\mathscr{F}_{n-1}^{*\sharp}(\Lambda)$ with compact fibers
$\mathcal {A}_w$. For any $(w,\gamma)\in\mathcal {A}$, we consider
the reduced linearized equations
\begin{equation}\label{eq4.1}
\dot{y}=R_{w,\gamma}^*(t)y,\quad
(t,y)\in\mathbb{R}\times\mathbb{R}^{n-1},
\end{equation}
which is defined as Definition~\ref{def2.1}, and consider the
reduced standard system
\begin{equation}\label{eq4.2}
\dot{y}=R_{w,\gamma}^*(t)y+V_{\textsl{rem}(w,\gamma)}^*(t,y),\quad
(t,y)\in\mathbb{R}\times\mathbb{R}_\mathfrak c^{n-1}
\end{equation}
for any $V\in\mathfrak{X}^1(\mathbb{E})$ defined as in
Definition~\ref{def2.11}, associated with $S$. Then, we can take
from Lemma~\ref{lem2.2} a constant $\eta_\Lambda>0$ such that
$$
\sup_{t\in\mathbb{R},(w,\gamma)\in\mathcal
{A}}\|R_{w,\gamma}^*(t)\|\le \eta_\Lambda<\infty.
$$
Thus, it follows from Corollary~\ref{cor2.3} that there is another
constant $\xi_\Lambda>0$ such that
\begin{align*}
\xi_\Lambda&=\sup_{\substack{t\in\mathbb{R}\\(w,\gamma)\in\mathcal
{A}}}
\left\{\sum_{k=1}^{p_-(w)}\int_{-\infty}^te^{\int_s^t\omega_k^*(\chi^{*\sharp}(\tau,(w,\gamma)))\,d\tau}\,ds\right.\\
&{\qquad\qquad}\qquad\left.+ \sum_{k=1+p_-(w)}^{n-1}\int_t^\infty
e^{\int_s^t\omega_k^*(\chi^{*\sharp}(\tau,(w,\gamma)))\,d\tau}\,ds\right\}\\
&<\infty.
\end{align*}

By Lemma~\ref{lem2.5}(1), Theorem~\ref{thm2.13} and
Theorem~\ref{thm3.1}, there is no loss of generality in assuming
that for any $(w,\gamma)\in\mathcal {A}$ the reduced standard
systems of $S$
\begin{equation}\label{eq4.3}
\dot{y}=R_{w,\gamma}^*(t)y+S_{\textsl{rem}(w,\gamma)}^*(t,y),\quad
(t,y)\in\mathbb{R}\times\mathbb{R}_\mathfrak c^{n-1}
\end{equation}
has no any nontrivial bounded global solutions on $\mathbb{R}$.

Let $\mathfrak N_\mathfrak c(w)=\{w+x\in\Sigma_w;\,
\|x\|\le\mathfrak c\}$. Given any $\varepsilon>0$ small enough to
satisfy that for any $w\in\Lambda$ and any $w^\prime\in\Lambda$ with
$\|w-w^\prime\|<\varepsilon$, we have $t_\cdot w^\prime\in\mathfrak
N_\mathfrak c(w)$ for some
$|t|<2\varepsilon\diamondsuit_\Lambda^{-1}$, where
$\diamondsuit_\Lambda=\inf_{w\in\Lambda}\|S(w)\|>0$. On the other
hand, according to~\cite{Dai07} we may assume that for any
$w\in\Lambda$, $\mathfrak N_\mathfrak c(w)\cap\mathfrak N_\mathfrak
c(t_\cdot w)=\varnothing$ for all
$|t|<2\varepsilon\diamondsuit_\Lambda^{-1}$.

Denote
$$
\rho_\xi=\frac{\xi}{4\xi_\Lambda(1+2\eta_\Lambda\xi_\Lambda)^{n-1}}
\quad\textrm{and}\quad
\kappa=\frac{1}{4\xi_\Lambda(1+2\eta_\Lambda\xi_\Lambda)^{n-1}}
$$
for any $\xi\in(0,\mathfrak c]$. Then, by Theorem~\ref{thm2.13}
there exists a $C^1$-neighborhood $\mathcal {U}$ of $S$ in
$\mathfrak{X}^1(\mathbb{E})$ and a constant $\xi\in(0,\mathfrak c]$
with $\xi<1$ such that for any $V\in\mathcal {U}$ we have
\begin{subequations}
\begin{align}
\sup_{\substack{(w,\gamma)\in\mathcal {A}\\(t,y)\in\mathbb
R\times\mathbb
R_\xi^{n-1}}}\|V_{\textsl{rem}(w,\gamma)}^*(t,y)\|&\le\varepsilon \rho_\xi\label{eq4.4a}\\
\intertext{and}
\|V_{\textsl{rem}(w,\gamma)}^*(t,y)-V_{\textsl{rem}(w,\gamma)}^*(t,y^\prime)\|&\le
\kappa\|y-y^\prime\|\quad\forall\,y,y^\prime\in\mathbb
R_{\xi}^{n-1}\label{eq4.4b}
\end{align}
\end{subequations}
uniformly for $(t,(w,\gamma))\in\mathbb R\times\mathcal {A}$.

Fix some $C^\infty$ bump function
$b\colon[0,\infty)\rightarrow[0,1]$ with $b|[0,1/2]\equiv 1$ and
$b|[1,\infty)\equiv 0$. For any $V\in\mathcal {U}$ and any
$(w,\gamma)\in\mathcal {A}$, let
$$
\widetilde{V}_{\textsl{rem}(w,\gamma),\xi}(t,y)=\begin{cases}
b(\|y\|/\xi)V_{\textsl{rem}(w,\gamma)}^*(t,y) &\textrm{ for }
\|y\|\le\mathfrak c,\\
\textbf{0} &\textrm{ for }\|y\|\ge \mathfrak c.
\end{cases}
$$
Next, we consider the adapted differential equations
\begin{equation}\label{eq4.5}
\dot{y}=R_{w,\gamma}^*(t)y+\widetilde{V}_{\textsl{rem}(w,\gamma),\xi}(t,y),\quad
(t,y)\in\mathbb{R}\times\mathbb{R}^{n-1}.
\end{equation}
It is easily seen that $R_{w,\gamma}^*(t)\in
M_{(n-1)\times(n-1)}^\vartriangle$ and
$\widetilde{V}_{\textsl{rem}(w,\gamma),\xi}(t,y)\in M_{(n-1)\times
1}$ as in $\S3$ in the case $p=n-1$ and $p_-=p_-(w)$ for any
$(w,\gamma)\in\mathcal {A}$. Let $y_{V,(w,\gamma),\xi}(t;s,u)$ be
the solution of (\ref{eq4.5}) such that
$y_{V,(w,\gamma),\xi}(s;s,u)=u$ for any
$(s,u)\in\mathbb{R}\times\mathbb{R}^{n-1}$.

Given any $V\in\mathcal {U}$.

For any $(w,\gamma)\in\mathcal {A}$, it follows from
Theorem~\ref{thm3.1} that there \emph{uniquely} corresponds an
$\textbf{x}\in\mathbb{R}^{n-1}$, writing $h_{V,\xi}(w)=\gamma
\textbf{x}\in\Sigma_w-w=\mathbb{T}_w$, such that
$y_{V,(w,\gamma),\xi}(t;0,\textbf{x})$ is bounded on $\mathbb{R}$
with
\begin{equation}\label{eq4.6}
{\sup}_{t\in\mathbb{R}}\|y_{V,(w,\gamma),\xi}(t;0,\textbf{x})\|\le\varepsilon\xi/4.
\end{equation}
So, $y_{V,(w,\gamma),\xi}(t;0,\textbf{x})$ is also the solution of
(\ref{eq4.2}). According to Theorem~\ref{thm2.12}(2) we easily see
that such $h_{V,\xi}(w)$ is independent of the choice of $\gamma$ in
$\mathcal {A}_w$ and is such that
$\|h_{V,\xi}(w)\|\le\min\{\varepsilon,\xi\}/4$ for $w\in\Lambda$. By
Theorem~\ref{thm3.1} and Theorem~\ref{thm2.12}(2) again we have
easily
\begin{equation}\label{eq4.7}
h_{V,\xi}(t_\cdot
w)=\psi_{V;t,w}(h_{V,\xi}(w))\in\mathbb{T}_{t_\cdot w}\quad
\forall\,t\in\mathbb{R},
\end{equation}
since $\psi_{V;t,w}(h_{V,\xi}(w))=\widehat{\mathcal
{P}}_{w,\gamma}^*(t,y_{V,(w,\gamma),\xi}(t;0,\textbf{x}))$ for any
$w\in\Lambda$. Moreover, we can assert that the mapping $w\mapsto
w+h_{V,\xi}(w)$ is injective. In fact, if
$w+h_{V,\xi}(w)=w^\prime+h_{V,\xi}(w^\prime)$ for some
$w,w^\prime\in\Lambda$, then $\|t_\cdot w-t_\cdot
w^\prime\|\le\varepsilon/2$ for all $t\in\mathbb{R}$. Since
(\ref{eq4.3}) has only one global bounded solution on $\mathbb{R}$,
there is some $t^\prime$ with
$|t^\prime|<2\varepsilon\diamondsuit_\Lambda^{-1}$ such that
$t^\prime_\cdot w^\prime=w$. Thus, $t^\prime=0$. Otherwise
$\mathfrak N_\mathfrak c(w)\cap\mathfrak N_\mathfrak
c(w^\prime)\not=\varnothing$, it is a contradiction.

Let $\Lambda_V=\{w+h_{V,\xi}(w)\,|\,w\in\Lambda\}$ and
$H_V\colon\Lambda\rightarrow\Lambda_V;\, w\mapsto
w+h_{V,\xi}(w)\in\Sigma_w$. Clearly, $\|w-H_V(w)\|<\varepsilon$. It
remains to prove that $\Lambda_V$ is closed in $\mathbb{E}$ and
$H_V$ is a homeomorphism.

At first, we show that $\Lambda_V$ is closed in $\mathbb{E}$ and
$H_V^{-1}\colon\Lambda_V\rightarrow\Lambda$ continuous as well. Let
$w_j^\prime\to w^\prime$ with $w_j^\prime\in\Lambda_V$ and
$w_j=H_V^{-1}(w_j^\prime)$ for $j=1,2,\ldots$. We have to prove
$w_j\to w$ for some $w\in\Lambda$ and $w^\prime=H_V(w)$. By the
definition of $H_V$, there is a sequence $(w_j,\gamma_j)$ in
$\mathcal {A}$ and a sequence $(\textbf{x}_j)$ in $\mathbb{R}^{n-1}$
such that
$$
w_j^\prime=w_j+h_{V,\xi}(w_j)=w_j+\gamma_j\textbf{x}_j\quad
\textrm{for } j=1,2,\ldots.
$$
Since $\gamma_j\in\mathcal
{A}_{w_j}\subset\mathscr{F}_{n-1,w_j}^{*\sharp}\subset\mathscr{F}_{n-1}^\sharp,
\|\textbf{x}_j\|\le\varepsilon\xi/4$ and $\mathscr{F}_{n-1}^\sharp$
is compact, without loss of generality we may assume that
$\gamma_j\to\gamma$ in $\mathscr{F}_{n-1}^\sharp$ and
$\textbf{x}_j\to \textbf{x}$ in $\mathbb{R}^{n-1}$. Let
$w=w^\prime-\gamma \textbf{x}$. Then $w_j\to w$ in $\Lambda$ and
$w^\prime=w+\gamma\textbf{x}$ and $(w,\gamma)\in\mathcal {A}$. In
order to prove $w^\prime=H_V(w)$, it is sufficient to prove that
$h_{V,\xi}(w)=\gamma\textbf{x}$. In fact, from
Theorem~\ref{thm2.12}(1) and a basic theorem of ODE, we have
$$
\lim_{j\to\infty}\sup_{|t|<T,\|u\|\le\mathfrak
c}\|y_{V,(w_j,\gamma_j),\xi}(t;0,u)-y_{V,(w,\gamma),\xi}(t;0,u)\|=0\quad\forall\,T>0.
$$
Thus, for all $t\in\mathbb{R}$ we have
$$
\lim_{j\to\infty}y_{V,(w_j,\gamma_j),\xi}(t;0,\textbf{x}_j)=y_{V,(w,\gamma),\xi}(t;0,\textbf{x}),
$$
which means
$\|y_{V,(w,\gamma),\xi}(t;0,\textbf{x})\|\le\varepsilon\xi/4$ for
all $t\in\mathbb{R}$. So, $h_{V,\xi}(w)=\gamma\textbf{x}$, as
desired.

We can show that $H_V$ is continuous by Theorem~\ref{thm3.2}.

Thus, the theorem is proved.
\end{proof}

\begin{rem}
If $\Lambda=\mathbb{E}$, then $\Lambda_V=\mathbb{E}$ by a standard
topology argument. Indeed, letting
$S^{n+1}=\mathbb{E}\cup\{\infty\}$, $H_V$ has a continuous extension
from the topological sphere $S^{n+1}$ to itself which maps $\infty$
to $\infty$ and is homotopic to the identity. Thus, from
differential topology we know that $H_V(\mathbb{E})=\mathbb{E}$.
\end{rem}

We conclude our arguments with an example.

\begin{example}\label{exa4.3}
Let $S(x,y,z)=(1,y,-z)^{\mathrm T}\in\mathbb{R}^3$ for any
$(x,y,z)\in\mathbb{E}^3$, which is a differential system on the
3-dimensional Euclidean $(x,y,z)$-space $\mathbb{E}^3$. Let
$\Lambda=\mathbb{R}\times\{0\}\times\{0\}$. Then, $S$ gives rise to
the $C^1$-flow $\phi\colon(t,(x,y,z))\mapsto(x+t,ye^t,ze^{-t})$, and
$S$ is hyperbolic with $\mathbb{T}_{(x,0,0)}=T_{(x,0,0)}^s\oplus
T_{(x,0,0)}^u$, where
$T_{(x,0,0)}^s=\{0\}\times\{0\}\times\mathbb{R},
T_{(x,0,0)}^u=\{0\}\times\mathbb{R}\times\{0\}$ for any
$(x,0,0)\in\Lambda$, and $S$ satisfies conditions (U1), (U2) and
(U3) on $\Lambda$. Thus, $S$ is structurally stable on $\Lambda$
from the Main Theorem. Particularly, for any $\varepsilon>0$, if
$V\in\mathfrak{X}^1(\mathbb{E}^3)$ is $C^1$-close to $S$, then $V$
has an integral curve which lies in the $\varepsilon$-tubular
neighborhood of $\mathbb{R}\times\{(0,0)\}$.
\end{example}

\end{document}